\newtheorem{theorem}{Theorem}
\newtheorem{lemma}{Lemma}
\newtheorem{corollary}{Corollary}
\newtheorem*{theorem*}{Theorem}
\newtheorem{proposition}{Proposition}
\newtheorem{remark}{Remark}
\begin{document}

\begin{center}
\large{Absolutely Continuous Spectrum for Parabolic Flows/Maps}
\end{center}

\begin{center}
Lucia D. Simonelli
\end{center}
\begin{center}
\textit{Abdus Salam International Centre for Theoretical Physics (ICTP) - Trieste, Italy}
\end{center}
\begin{center}
lucia.simonelli@ictp.it
\end{center}


%

\bigskip


\begin{abstract}
We provide an abstract framework for the study of certain spectral properties of parabolic systems; specifically, we  determine under which general conditions to expect the presence of absolutely continuous spectral measures. We use these general conditions to derive results for spectral properties of time-changes of unipotent flows on homogeneous spaces of semisimple groups regarding absolutely continuous spectrum as well as maximal spectral type; the time-changes of the horocycle flow are special cases of this general category of flows. In addition we use the general conditions to derive spectral results for twisted horocycle flows and to rederive certain spectral results for skew products over translations and Furstenberg transformations.
\end{abstract}

\section{Introduction}

\subsection{Motivation}

\noindent Spectral theory of dynamical systems has long been studied \cite{lema}; of particular interest, is the notion of when to expect the presence of absolutely continuous spectral measures. Since absolutely continuous spectrum implies mixing, this property can be thought of as an indicator of how chaotic, or how far from orderly, a system is. In the hyperbolic setting, systems are characterized as having a correlation decay that is exponential. As a result, techniques derived from the existence of a spectral gap as well as probabilistic tools are available for the study of spectral properties, and therefore, it is in the hyperbolic setting where the existence of absolutely continuous spectrum predominantly occurs. Interestingly, certain parabolic systems also share this property despite having at most polynomial decay of correlations. This slower decay of correlations precludes the use of the tools available in the spectral study of hyperbolic systems, and consequently, spectral theory of smooth parabolic flows and smooth perturbations of well known parabolic flows has been much less studied. This work is devoted to creating an abstract framework for the study of certain spectral properties of parabolic systems. Specifically, we attempt to answer the question: under what general conditions can we expect the existence of absolutely continuous spectral measures?

\subsection{Statement of results}

\noindent In Theorem \ref{main} in we present general conditions under which we expect a skew-adjoint operator to have absolutely continuous spectrum. The proof of this theorem is a general application of the method in \cite{forniulci},\break\newpage

 \noindent in which the authors show that the Fourier Transform of the spectral measures of smooth coboundaries are square integrable. The method in \cite{forniulci} was inspired by Marcus's proof of mixing of horocycle flows \cite{marcusfigure} which requires a specific form of tangent dynamics from which one can exploit shear of nearby trajectories. In addition, we rely on the bootstrap technique from \cite{forniulci} to estimate the decay of correlations of specific smooth coboundaries. Our choice of coboundaries depends upon a growth condition involving the commutator of the skew-adjoint operator and a certain auxiliary operator.
%

We use this general, functional analytic result to derive the following results for certain parabolic dynamical systems. Theorem \ref{unipotent} states that time-changes of unipotent flows on homogeneous spaces of semisimple groups have absolutely continuous spectrum. In the compact case, we also show that the maximal spectral type is Lebesgue, following the method in \cite{forniulci}. The time-changes of the horocycle flow are special cases of this general category of flows, and spectral properties of the time-changes of horocycle flows were shown in \cite{forniulci}, \cite{tiedrahor}, and \cite{tiedra}. In addition, we use the general conditions to prove Theorem \ref{twisted} and Corollary \ref{moretwisted} regarding spectral results for twisted horocycle flows, combining the horocycle time-change with a circle rotation.  Lastly, we rederive certain spectral results for skew products over translations and Furstenberg transformations, originally shown in \cite{tiedra}. Our results are slightly weaker as we prove results for functions of class $C^{2}$ while the author in \cite{tiedra} considers functions of class $C^{1}$ with an added Dini Condition.


\begin{remark} It is standard to consider spectral decomposition in the setting of self-adjoint operators. When we consider flows that are represented as strongly  continuous one-parameter unitary groups, the generating operators (vector fields) are essentially skew-adjoint. Since multiplication by $i$ gives an essentially self-adjoint operator we make no distinction from the standard setting, and thus, directly apply the theory for self-adjoint operators.
\end{remark}

\vspace*{-8pt}
\section{Abstract conditions}


\subsection{Preliminary assumptions}
\noindent Suppose that a closed operator $X$ on a Hilbert Space $\mathcal{H}$, defined on a dense subspace $D$ such that $X(D) \subset D$, generates a strongly continuous, one parameter group $\{e^{sX}\}$.

Suppose also that $e^{tU}(D)\subset D$ is a strongly continuous, unitary group with infinitesimal generator $U$, and that the commutator
\[
H(t)=e^{-tU}\left [X,e^{tU} \right ]
\]
\noindent is defined on $D$.

For $u \in D$, let
\[
\frac{H(t)}{t^{\beta}}u
\overset{\mathcal{H}}{\underset{t \to \infty}\longrightarrow} Hu,
\]
\noindent such that $H(D) \subset D$ and $\overline{Ran(H)}=\overline{\left \{Hu:u \in D \right \}}=\mathcal{H}$.


For $B_{1}$, $B_{2}$ bounded operators on $\mathcal{H}$ such that 
$B_{2}: D \to D$, let
\[
\left \Vert \; \langle e^{tU}f,f \rangle_{\mathcal{H}}\; \right \Vert_{L^{2}(\mathbb{R})} \; \leq\; \left  \Vert  \frac{1}{\sigma}\int_{0}^{\sigma} \langle e^{sX}e^{tU}f, B_{1}e^{sX}B_{2}f \rangle _{\mathcal{H}} ds\> \right \Vert_{L^{2}(\mathbb{R})}.
\]

\begin{theorem} \label{main}

\noindent If for $\beta > \frac{1}{2}$, $H(t)$ and $H$ satisfy:

\begin{itemize}

\item[(i)] $\frac{H(t)}{t^{\beta}}H^{-1}$ is defined on $Ran(H)$, extends by continuity to a bounded operator\break\newpage

 \noindent $\frac{\tilde{H}(t)H^{-1}}{t^{\beta}}$ on $\mathcal{H}$ with uniformly bounded  $\left \Vert \cdot \right \Vert_{op}$ norm in $t$, and satisfies
\[
\limsup_{t \to \infty} \left \Vert I-\frac{\tilde{H}(t)H^{-1}}{t^{\beta}} \right \Vert_{op} < 1.
\]

\item[(ii)] $ \left [X,\frac{H(t)}{t^{\beta}}H^{-1} \right ]$ is defined on $Ran(H)$ and extends by continuity to a bounded operator on $\mathcal{H}$ with uniformly bounded  $\left \Vert \cdot \right \Vert_{op}$ norm in $t$

\item[(iii)] $\big [H(t),H \big ]H^{-1}$ is defined on $Ran(H)$ and extends by continuity to a bounded operator on $\mathcal{H}$ with uniformly bounded  $\left \Vert \cdot \right \Vert_{op}$ norm in $t$

\end{itemize}
\noindent then for all $f \in \overline{Ran(H)}=\mathcal{H}$, the associated spectral measures,  $\mu_{f}$, of $U$ are absolutely continuous.

 \end{theorem}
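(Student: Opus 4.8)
\noindent\textbf{Proof proposal.}\quad The plan is to reduce the statement to a quantitative decay estimate for the self-correlations $t\mapsto\langle e^{tU}f,f\rangle_{\mathcal H}$ and then to invoke the standard spectral dictionary. First I would recall that, writing $U=iA$ with $A$ self-adjoint and $\mu_f=\langle E_A(\cdot)f,f\rangle$, one has $\langle e^{tU}f,f\rangle_{\mathcal H}=\widehat{\mu_f}(t)$, so that $\langle e^{tU}f,f\rangle\in L^2(\mathbb R)$ forces $\mu_f$ to be absolutely continuous (indeed with $L^2$ density) by Plancherel. Since the absolutely continuous subspace $\mathcal H_{ac}(U)=\{f:\mu_f\ll\mathrm{Leb}\}$ is closed and $\overline{Ran(H)}=\mathcal H$, it suffices to establish $\langle e^{tU}f,f\rangle\in L^2(\mathbb R)$ for every $f=Hu$ with $u\in D$; the hypothesis $H(D)\subset D$ ensures $f\in D$, so that $X$, $H(t)$, $e^{sX}$ and $B_2$ may legitimately be applied to $f$ throughout.

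Second, I would apply the preliminary inequality, which dominates $\|\langle e^{tU}f,f\rangle\|_{L^2(\mathbb R)}$ by the $L^2(\mathbb R)$-norm of the $e^{sX}$-averaged correlation $\frac1\sigma\int_0^\sigma\langle e^{sX}e^{tU}f,B_1e^{sX}B_2f\rangle_{\mathcal H}\,ds$, reducing everything to an estimate of this average for $f=Hu$. The mechanism is shear/renormalization: from $H(t)=e^{-tU}[X,e^{tU}]$ one gets $[X,e^{tU}]=e^{tU}H(t)$, so $e^{sX}e^{tU}e^{-sX}$ has $s$-derivative $e^{sX}e^{tU}H(t)e^{-sX}$ --- the conjugated group is sped up by $\sim t^{\beta}$ because $H(t)/t^{\beta}\to H$. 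Concretely, for $f=Hu$ the identity $\tfrac1{t^\beta}e^{tU}H(t)u=e^{tU}\big(\tfrac{H(t)}{t^\beta}H^{-1}\big)f$ together with the commutator relation rewrites $e^{tU}\big(\tfrac{H(t)}{t^\beta}H^{-1}\big)f=\tfrac1{t^\beta}\big(Xe^{tU}u-e^{tU}Xu\big)$; inserting $e^{sX}$ and recognizing $e^{sX}Xe^{tU}u=\partial_s\big(e^{sX}e^{tU}u\big)$, I would integrate by parts in $s$ on $[0,\sigma]$. This produces boundary terms carrying the explicit factor $t^{-\beta}$ and a remaining integral in which $\partial_s$ has been transferred onto the bounded, $D$-preserving factor $B_1e^{sX}B_2$; conditions (ii) and (iii) enter here to keep the commutators of $X$ with $\tfrac{H(t)}{t^\beta}H^{-1}$ and of $H(t)$ with $H$ (relative to $H^{-1}$) bounded uniformly in $t$, so that every manipulation stays inside $\mathcal H$ with $t$-uniform operator bounds.

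Third, I would close the estimate by a bootstrap. Condition (i) --- that $A(t):=\tfrac{\tilde H(t)H^{-1}}{t^\beta}$ is bounded with $c:=\limsup_{t\to\infty}\|I-A(t)\|_{op}<1$ --- makes $A(t)$ boundedly invertible for large $t$ (Neumann series), so passing between $f$ and $A(t)f$ is legitimate; but the error so incurred carries no explicit $t^{-\beta}$ and instead reproduces self-correlations of related coboundaries up to the factor $\|I-A(t)\|_{op}$. Hence the previous step yields, for truncated correlations $t\mapsto\mathbf 1_{\{|t|\le T\}}\langle e^{tU}f,f\rangle$ (the truncation ensuring a priori finiteness), an inequality of the form $M_T\le C\,\nu(f)+cM_T$, where $\nu$ is a stronger norm of $f$ and $M_T$ is a supremum over a suitably normalized family of coboundaries; since $c<1$ this gives $M_T\le C\nu(f)/(1-c)$ uniformly in $T$. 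Letting $T\to\infty$ and noting that the $|t|^{-\beta}$ decay packaged in $\nu$ is square-integrable at infinity precisely because $\beta>\tfrac12$, we conclude $\langle e^{tU}f,f\rangle\in L^2(\mathbb R)$, hence $\mu_f\ll\mathrm{Leb}$; by the first step this propagates to all $f\in\overline{Ran(H)}=\mathcal H$.

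The main obstacle is the execution of the second and third steps in tandem: performing the integration by parts rigorously in the presence of the unbounded operators $X$ and $H^{-1}$; verifying that the only unbounded combinations that ever occur are exactly the three that (i)--(iii) declare bounded; and, most delicately, organizing the $H\leftrightarrow t^{-\beta}H(t)$ substitution so that the mere strict bound $c<1$ --- rather than $c=0$ --- is enough to close the bootstrap. The hypothesis $\beta>\tfrac12$ itself is used only at the very end, as the threshold that makes the resulting decay rate $|t|^{-\beta}$ lie in $L^2(\mathbb R)$ at infinity.
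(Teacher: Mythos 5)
Your overall architecture matches the paper's: reduce to showing $\langle e^{tU}f,f\rangle\in L^2(\mathbb R,dt)$ for coboundaries $f=Hu$ via Plancherel and density of $Ran(H)$; use the preliminary inequality and a first integration by parts to reduce to bounding $\bigl\Vert\int_0^\sigma e^{sX}e^{tU}f\,ds\bigr\Vert_{\mathcal H}$; exploit the identity $e^{tU}H(t)=[X,e^{tU}]$ to extract the explicit $t^{-\beta}$ from the piece $\tfrac{1}{t^\beta}\int_0^\sigma e^{sX}e^{tU}H(t)g\,ds$; and close via a bootstrap that works because $c=\limsup_t\Vert I-\tilde H(t)H^{-1}/t^\beta\Vert_{op}<1$, with $\beta>\tfrac12$ invoked only at the end. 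This is the paper's proof in outline.

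The place where your sketch does not close as written is the bootstrap. You propose to pass between $f$ and $A(t)f$ using Neumann-series invertibility of $A(t)=\tilde H(t)H^{-1}/t^\beta$ and to iterate over a ``family of related coboundaries,'' with a $T$-truncation to ensure a priori finiteness. But $(I-A(t))Hg$ is not of the form $Hg'$ for any $g'$ in your class, so there is no well-defined family of coboundaries over which the supremum $M_T$ can be taken, and it is unclear what uniform norm $\nu(f)$ controls the iterates. None of this is needed. The paper instead introduces, for fixed $t$, the operator $\tilde H(s,t)=e^{sX}e^{tU}\bigl(I-\tfrac{\tilde H(t)H^{-1}}{t^\beta}\bigr)e^{-tU}e^{-sX}$, so that the troublesome term equals $\int_0^\sigma\tilde H(s,t)\,e^{sX}e^{tU}f\,ds$, and integrates by parts in $s$:
\[
\int_0^\sigma\tilde H(s,t)\,e^{sX}e^{tU}f\,ds=\tilde H(\sigma,t)\int_0^\sigma e^{sX}e^{tU}f\,ds-\int_0^\sigma\frac{\partial\tilde H(S,t)}{\partial S}\Bigl[\int_0^S e^{sX}e^{tU}f\,ds\Bigr]dS.
\]
Crucially this brings back the \emph{same} integral $\int_0^s e^{sX}e^{tU}f\,ds$ (same $f$), not a related coboundary. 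Conditions (i)--(iii) give $\Vert\tilde H(s,t)\Vert_{op}\le C_1<1$ and $\Vert\partial_s\tilde H(s,t)\Vert_{op}\le C_2$ uniformly in large $t$ and small $\sigma$, so at each fixed $t$ one has the self-referential scalar inequality
\[
M(t)\le (C_1+\sigma C_2)\,M(t)+\frac{C_3\Vert Xg\Vert_{\mathcal H}+C_4\Vert g\Vert_{\mathcal H}}{t^\beta},
\qquad M(t):=\sup_{s\in[0,\sigma]}\Bigl\Vert\int_0^s e^{sX}e^{tU}f\,ds\Bigr\Vert_{\mathcal H},
\]
in which $M(t)$ is trivially finite for each $t$ (bounded by $\sigma\sup_{s\le\sigma}\Vert e^{sX}\Vert\cdot\Vert f\Vert$), so no $T$-truncation is required. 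Choosing $\sigma$ so $C_1+\sigma C_2<1$ closes the bootstrap, giving $M(t)=O(t^{-\beta})$ and hence $\hat\mu_f\in L^2$. Your plan would be correct if the Neumann-inversion and family-of-coboundaries steps were replaced by this single integration-by-parts against $\tilde H(s,t)$.
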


\begin{remark} Often in ergodic theory, $\mathcal{H}$ is a subspace of a larger Hilbert Space; for example, $\mathcal{H}=L^{2}_{0}(M)$ the space of zero-average functions in $L^{2}(M)$. While in this setting the Theorem doesn't give a result for purely absolutely continuous spectrum it implies the existence of an absolutely continuous component.
\end{remark}

\begin{remark} The utilization of the operator $H(t)$ is suggested by the method in \cite{forniulci} which was based on Marcus's shear mechanism in \cite{marcusfigure}. The authors in \cite{tiedraETDS} and \cite{richard} have used a similar term to derive criteria for strong mixing.
\end{remark}

%
%


\begin{proof}
\noindent Let $f \in Ran(H)$ and let $\hat{\mu}_{f}(t)=\int_{\mathbb{R}} e^{it \xi} d\mu_{f}(\xi)$ be the Fourier Transform of the spectral measure $\mu_{f}$.
\[
\left \Vert  \hat{\mu}_{f}(t) \right \Vert_{L^{2}(\mathbb{R})} =\left \Vert \; \langle e^{tU}f,f \rangle _{\mathcal{H}} \> \right \Vert_{L^{2}(\mathbb{R})}\; \leq\;\left  \Vert \frac{1}{\sigma}\int_{0}^{\sigma} \langle e^{sX}e^{tU}f, B_{1}e^{sX}B_{2}f \rangle_{\mathcal{H}} \> ds \> \right \Vert_{L^{2}(\mathbb{R})}.
\]
\noindent For $s \in [0, \sigma]$, we integrate by parts:
\[
\hspace{2cm} B_{1}e^{sX}B_{2}f \hspace{2cm}  \frac{d}{ds}\left (B_{1}e^{sX}B_{2}f \right ) = B_{1}e^{sX}X(B_{2}f)
\]
\[
e^{sX}e^{tU}f \hspace{1cm}  \hspace{2cm} \int_{0}^{\sigma} e^{sX}e^{tU}fds.
\]
\[
\begin{split}
\frac{1}{\sigma}\int_{0}^{\sigma} \langle e^{sX}e^{tU}f, B_{1}e^{sX}B_{2}f \rangle _{\mathcal{H}} \>ds
=& \> \frac{1}{\sigma} \langle \int_{0}^{\sigma} e^{sX}e^{tU}fds, B_{1}e^{\sigma X}B_{2}f \rangle _{\mathcal{H}}\\
 -& \frac{1}{\sigma}\int_{0}^{\sigma} \langle \int_{0}^{S}e^{sX}e^{tU}fds, B_{1}e^{sX}X(B_{2}f) \rangle _{\mathcal{H}} dS
\end{split}
\]
\noindent
From our assumptions, both $B_{1}e^{sX}B_{2}f$ and $ B_{1}e^{sX}X(B_{2}f)$ are bounded in $\mathcal{H}$. Thus, in order to show that $\hat\mu_{f}(t) = O(\frac{1}{t^{\beta}})$, we need a bound (in $t$) for
\[
\left \Vert \int_{0}^{\sigma}e^{sX}e^{tU}f\>ds \right \Vert_{\mathcal{H}}.
\]

%

Suppose that conditions $(i)$, $(ii)$, and $(iii)$ hold, and let $f$ be  a coboundary of the form $f=Hg$, for $g \in Dom(H)$:
\[
\begin{split}
\int_{0}^{\sigma}e^{sX}e^{tU}f\>ds =& \int_{0}^{\sigma}e^{sX}e^{tU}Hg\>ds \\
=&\underbrace{\int_{0}^{\sigma}e^{sX}e^{tU}\left (H-\frac{H(t)}{t^{\beta}} \right )g\>ds}_\text{I.}+\underbrace{\int_{0}^{\sigma}e^{sX}e^{tU}\frac{H(t)}{t^{\beta}}g\> ds}_\text{II.}.
\end{split}
\]

\noindent I.  \hspace{0.5cm} Let
\[
\tilde{H}(s,t)=e^{sX}e^{tU}\left (I-\frac{\tilde{H}(t)H^{-1}}{t^{\beta}}\right )e^{-tU}e^{-sX},
\]

\noindent where $\frac{\tilde{H}(t)H^{-1}}{t^{\beta}}$ is the bounded extension of $\frac{H(t)H^{-1}}{t^{\beta}}$.

%
%
%
%
Since $\{e^{sX}\}$ is strongly continuous, for $f \in \mathcal{D}$,
\[
s\!\!-\!\!\lim_{s \to 0} e^{sX}f = If.
\]
\noindent Thus,
\[
\sup_{s \in [0,\sigma]} \Vert e^{sX} \Vert_{op} \leq 1 + k(\sigma)
\]
\noindent where
\[
\lim_{\sigma \to 0} k(\sigma) = 0.
\]
\noindent Since $\limsup\limits_{t \to \infty} \left \Vert I-\frac{\tilde{H(t)}H^{-1}}{t^{\beta}} \right \Vert_{op} < 1$, then for large $t$ and small enough $\sigma$,
\[
 \Vert \tilde{H}(s,t) \Vert _{\mathcal{H}} < C_{1} < 1.
\]
\noindent We compute, for $f=e^{sX}(e^{tU}(Hu))$,
\[
\begin{split}
\lim_{\Delta s \to 0} & \frac{\tilde{H}(s+ \Delta s,t)f  -\tilde{H}(s,t)f}{\Delta s}\\
=&\lim_{\Delta s \to 0} \left(  \frac{e^{(s+\Delta s)X}e^{tU}\left (\frac{\tilde{H(t)}H^{-1}}{t^{\beta}}\right)e^{-tU}e^{-(s+ \Delta s)X}f}{\Delta s} \right.\\
-& \left. \frac{e^{sX}e^{tU}\left (\frac{\tilde{H(t)}H^{-1}}{t^{\beta}}\right )e^{-tU}e^{-sX}f}{\Delta s}  \right)\\
=&\lim_{\Delta s \to 0} \left ( \frac{e^{(s+\Delta s)X}e^{tU}\left (\frac{\tilde{H(t)}H^{-1}}{t^{\beta}}\right )e^{-tU}e^{-(s+ \Delta s)X}f}{\Delta s} \right.\\
\pm&  \frac{e^{(s+\Delta s)X}e^{tU}\left (\frac{\tilde{H(t)}H^{-1}}{t^{\beta}}\right )e^{-tU}e^{-sX}f}{\Delta s}\\
-& \left. \frac{e^{sX}e^{tU}\left (\frac{\tilde{H(t)}H^{-1}}{t^{\beta}}\right )e^{-tU}e^{-sX}f}{\Delta s} \right)\\
=& \lim_{\Delta s \to 0} \left( \frac{(e^{(s+\Delta s)X})e^{tU}\left (\frac{\tilde{H(t)}H^{-1}}{t^{\beta}}\right )e^{-tU}(e^{-sX}-e^{-(s + \Delta)X})f}{\Delta s} \right. 
\end{split}
\]\[
\begin{split}+& \left. \frac{(e^{(s+\Delta s)X}-e^{sX})e^{tU}\left (\frac{\tilde{H(t)}H^{-1}}{t^{\beta}}\right )e^{-tU}e^{-sX}f}{\Delta s} \right )\\
=&\lim_{\Delta s \to 0} \left( \frac{e^{sX}e^{\Delta s X}e^{tU}\left (\frac{\tilde{H(t)}H^{-1}}{t^{\beta}}\right )e^{-tU}e^{-sX}(I-e^{-\Delta sX})f}{\Delta s}\right. \\
+& \left. \frac{e^{sX}(e^{\Delta s X}-I)e^{tU}\left (\frac{\tilde{H(t)}H^{-1}}{t^{\beta}}\right )e^{-tU}e^{-sX}f}{\Delta s} \right)\\
 \pm &\lim_{\Delta s \to 0} e^{sX}e^{\Delta s X} e^{tU} \left ( \frac{\tilde{H(t)}H^{-1}}{t^{\beta}}  \right ) e^{-tU}e^{-sX}Xf \\
=&\lim_{\Delta s \to 0} e^{sX}e^{\Delta s X}e^{tU}\left(\frac{\tilde{H(t)}H^{-1}}{t^{\beta}}\right )e^{-tU}e^{-sX}\left(\frac{I-e^{-\Delta sX}}{\Delta s}-X\right)f\\
+  & \lim_{\Delta s \to 0} e^{sX}e^{\Delta s X}e^{tU}\left (\frac{\tilde{H(t)}H^{-1}}{t^{\beta}}\right )e^{-tU}e^{-sX}Xf\\
+ &\lim_{\Delta s \to 0} e^{sX}   \left ( \frac{e^{\Delta s X}-I}{\Delta s} \right)  e^{tU} \left (\frac{\tilde{H(t)}H^{-1}}{t^{\beta}}\right )e^{-tU}e^{-sX}f.
\end{split}
\]
\noindent  The first limit,
\[
\lim_{\Delta s \to 0} e^{sX}e^{\Delta s X}e^{tU}\left(\frac{\tilde{H(t)}H^{-1}}{t^{\beta}}\right )e^{-tU}e^{-sX}\left(\frac{I-e^{-\Delta sX}}{\Delta s}-X\right )f = 0,
\]
\noindent follows from the preliminary assumption,
\[
\lim_{\Delta s \to 0} \left ( \frac{e^{\Delta s}-I}{\Delta s}\right ) = X.
\]
\noindent and
\[
\lim_{\Delta s \to 0} e^{\Delta s X} = I.
\]
\noindent The second and third limits follow from above,
\[
\begin{split}
\lim_{\Delta s \to 0} e^{sX}e^{\Delta s X}e^{tU}\left (\frac{\tilde{H(t)}H^{-1}}{t^{\beta}}\right )e^{-tU}e^{-sX}Xf\\
=& \> e^{sX}e^{tU}\left (\frac{\tilde{H(t)}H^{-1}}{t^{\beta}}\right )e^{-tU}e^{-sX}Xf
\end{split}
\]
\noindent and
\[
\begin{split}
 \lim_{\Delta s \to 0} e^{sX}   \left ( \frac{e^{\Delta s X}-I}{\Delta s} \right )  e^{tU} \left (\frac{\tilde{H(t)}H^{-1}}{t^{\beta}}\right )e^{-tU}e^{-sX}f\\
 =    \> e^{sX} X & e^{tU} \left (\frac{\tilde{H(t)}H^{-1}}{t^{\beta}}\right )e^{-tU}e^{-sX}f.
\end{split}
\]
\noindent Since $f=e^{sX}(e^{tU}(Hu))$,
\[
\begin{split}
e^{sX}Xe^{tU}\tilde{H(t)}H^{-1}e^{-tU}e^{-sX}f\\
=& \> e^{sX}Xe^{tU}\tilde{H(t)}H^{-1}e^{-tU}e^{-sX}e^{sX}(e^{tU}(Hu))\\
=& \> e^{sX}Xe^{tU}H(t)u \in \mathcal{H}
\end{split}
\]
\noindent and
\[
\begin{split}
e^{sX}e^{tU}\tilde{H(t)}H^{-1}e^{-tU}Xe^{-sX}f\\
=& \> e^{sX}e^{tU}\tilde{H(t)}H^{-1}e^{-tU}Xe^{-sX}e^{sX}e^{tU}(Hu)\\
=& \> e^{sX}e^{tU}\tilde{H(t)}H^{-1}e^{-tU}Xe^{tU}(Hu) \in \mathcal{H}
\end{split}
\]
\noindent since $e^{tU}(Hu) \subset D$ and $\tilde{H(t)}H^{-1}$ is a bounded operator on $\mathcal{H}$.


Since $Ran(U)$ is dense in $\mathcal{H}$ and $e^{tU}$ and $e^{sX}$ are bounded, invertible operators, $e^{sX}\left (e^{tU}(Ran(H))\right )$ is dense in $\mathcal{H}$. Thus,
\[
\frac{\partial \tilde{H(s,t)}}{\partial s} =\frac{1}{t^{\beta}}e^{sX}\left [X,e^{tU}\tilde{H(t)}H^{-1}e^{-tU} \right ]e^{-sX}
\]
\noindent is defined on $\mathcal{H}$.
%

Now we can rewrite
\[
\int_{0}^{\sigma}e^{sX}e^{tU}\left (H-\frac{H(t)}{t^{\beta}} \right )gds = \int_{0}^{\sigma}e^{sX}e^{tU}\left (I-\frac{H(t)H^{-1}}{t^{\beta}} \right )Hg \> ds
\]
\noindent and again consider the extension
\[
\int_{0}^{\sigma}e^{sX}e^{tU}\left (I-\frac{\tilde{H(t)}H^{-1}}{t^{\beta}} \right )Hgds = \int_{0}^{\sigma}\tilde{H}(s,t)e^{sX}e^{tU}f \> ds.
\]


Integration by parts gives
\[
\begin{split}
\int_{0}^{\sigma}\tilde{H}(s,t)e^{sX}e^{tU}fds\\
=&\tilde{H}(\sigma,t)\int_{0}^{\sigma}e^{sX}e^{tU}fds
-\int_{0}^{\sigma}\frac{\partial  \tilde{H}(S,t)}{\partial S}\left [\int_{0}^{S}e^{sX}e^{tU}fds \right ]dS.
\end{split}
\]


So now we must ensure that
\[
\frac{\partial \tilde{H}(s,t)}{\partial s} = \frac{1}{t^{\beta}}e^{sX}\left [X,e^{tU}\tilde{H(t)}H^{-1}e^{-tU} \right ]e^{-sX}
\]
\noindent is uniformly bounded in $\Vert \cdot \Vert_{op}$. Let $h \in \mathcal{H}$.
\[
\begin{split}
\frac{1}{t^{\beta}}e^{sX}\left (\left [X,e^{tU}\tilde{H(t)}H^{-1}e^{-tU} \right ] \right )e^{-sX}h\\
=& \> \frac{1}{t^{\beta}}e^{sX}\left (\left [X,e^{tU}\right ]\tilde{H(t)}H^{-1}e^{-tU}\right.\\
+& \> e^{tU}\left [X,\tilde{H(t)}H^{-1} \right ]e^{-tU}\\
+& \left. e^{tU}\tilde{H(t)}H^{-1}\left [X,e^{-tU}\right ]\right ) e^{-sX}h.
\end{split}
\]

Using the identity $e^{-tU}\left [X,e^{tU}\right ]=-\left [X,e^{-tU}\right ]e^{tU}$ we can simplify and combine terms:
\[
\frac{1}{t^{\beta}}e^{sX}e^{tU}\left (H(t)\tilde{H(t)}H^{-1}+\left [X,\tilde{H(t)}H^{-1}\right ]-\tilde{H(t)}H^{-1}H(t) \right )e^{-tU}e^{-sX}h
\]
\[
= e^{sX}e^{tU}\left (\left [X,\frac{\tilde{H(t)}H^{-1}}{t^{\beta}}\right ]-\frac{\tilde{H(t)}H^{-1}}{t^{\beta}}\left [\tilde{H(t)},H \right ]H^{-1}  \right )e^{-tU}e^{-sX}h
\]
\noindent where $\left [X,\frac{\tilde{H(t)}H^{-1}}{t^{\beta}} \right ]$ and $\left [\tilde{H(t)},H \right ]H^{-1}$ are the bounded extensions of $\left [X,\frac{H(t)H^{-1}}{t^{\beta}} \right]$ and $\left [H(t),H \right ]H^{-1}$.


Conditions $(i)$, $(ii)$, and $(iii)$ imply that
\[
\begin{split}
\left \Vert \frac{\partial \tilde{H}(s,t)}{\partial s} h \right \Vert_{\mathcal{H}}\\
\leq & C\left (\left \Vert   \frac{\tilde{H(t)}}{t^{\beta}} H^{-1} \right \Vert_{op} \cdot \> \left \Vert [\tilde{H(t)}, H]H^{-1} \right \Vert_{op}
+ \left \Vert [X,\frac{\tilde{H(t)}}{t^{\beta}}H^{-1}] \right \Vert_{op}\right ) \cdot \left \Vert h \right \Vert_{\mathcal{H}}\\
\leq & C_{2}\cdot \Vert h \Vert_{\mathcal{H}}
\end{split}
\]
\noindent for some constants $C$ and $C_{2}$, and thus,
\[
\left \Vert \frac{\partial \tilde{H}(s,t)}{\partial s}  \right \Vert_{op} \leq C_{2}.
\]

\noindent II. \hspace{0.5cm} For $g \in Dom(H)$,
\[
\frac{1}{t^{\beta}}\int_{0}^{\sigma}e^{sX}e^{tU}H(t)gds= \frac{1}{t^{\beta}}\int_{0}^{\sigma}e^{sX}e^{tU}Xgds - \frac{1}{t^{\beta}}\int_{0}^{\sigma}\frac{d}{ds}e^{sX}e^{tU}gds
\]
\noindent which implies that
\[
\left \Vert \frac{1}{t^{\beta}}\int_{0}^{\sigma}e^{sX}e^{tU}H(t)gds  \right \Vert_{\mathcal{H}} \leq \frac{C_{3}}{t^{\beta}} \left \Vert Xg \right \Vert_{\mathcal{H}} + \frac{C_{4}}{t^{\beta}}\left \Vert g \right \Vert_{\mathcal{H}}.
\]
\noindent Finally, from I. and II.,
\[
\begin{split}
\sup_{s \in [0,\sigma]} \Vert \int_{0}^{s}e^{sX}e^{tU}fds \Vert_{\mathcal{H}}
\leq & \sup_{s\in [0,\sigma]} \left (\left \Vert \tilde{H}(s,t) \right \Vert_{op} \cdot \left \Vert \int_{0}^{s} e^{sX}e^{tU}fds \right \Vert_{\mathcal{H}} \right )\\
 + & \> \sigma \cdot \sup_{s\in [0,\sigma]} \left (\left \Vert \frac{\partial \tilde{H}(s,t)}{\partial s} \right \Vert_{op} \cdot \left \Vert \int_{0}^{s} e^{sX}e^{tU}fds \right \Vert_{\mathcal{H}} \right )\\
 + & \frac{C_{3} \left \Vert Xg \right \Vert_{\mathcal{H}} +  C_{4}\left \Vert g \right \Vert_{\mathcal{H}}}{t^{\beta}}.
\end{split}
\]
\noindent So for $\sigma>0$, chosen such that $0 < C_{1}+ \sigma C_{2} < 1$, for all $t$ sufficiently large,
\[
\sup_{s\in[0,\sigma]} \left \Vert \int_{0}^{s}e^{sX}e^{tU}fds \right \Vert_{\mathcal{H}} \leq \frac{C_{3}\left \Vert Xg \right \Vert_{\mathcal{H}} + C_{4} \left \Vert g \right \Vert_{\mathcal{H}}}{t^{\beta}}\frac{1}{(1-C_{1} - \sigma  C_{2})} = O(\frac{1}{t^{\beta}}).
\]


\noindent
Thus, since $\hat{\mu}_{f}(t) \in L^{2}(\mathbb{R})$, $\mu_{f}$ is absolutely continuous. As this holds for $f \in Ran(H)$, $\mu_{f}$ is absolutely continuous for any $f \in \mathcal{H}$.


\begin{remark}
The proof for the discrete case is the same aside from the replacement of the continuous parameter $t$ and norm $\Vert \cdot \Vert_{L^{2}(\mathbb{R})}$ by the discrete parameter $n$ and norm $\Vert \cdot \Vert_{\ell^{2}(\mathbb{Z})}$. The conclusion becomes
\[
\left \Vert \; \langle e^{nU}f,f \rangle_{\mathcal{H}}\; \right \Vert_{\ell^{2}(\mathbb{Z})}=O(\frac{1}{n^{\beta}})
\]
\noindent \textit{for $\beta > \frac{1}{2}$, and thus, $\mu_{f}(n) \in \ell^{2}(\mathbb{Z})$.}

\end{remark}
\end{proof}

\section{Applications to flows}

\subsection{Time-changes of unipotent flows on
 homogeneous spaces of semisimple groups}
\noindent As a direct consequence of Theorem \ref{main}, we derive a result for a specific category of generating operators.


Let $G$ be a semisimple Lie group and let the manifold $M = \Gamma \setminus G$ for some lattice $\Gamma$ in $G$ such that $M$ has finite area.


By the Jacobson-Morozov Theorem, any nilpotent element $U$ of the semisimple Lie algebra of $G$ is contained in a subalgebra isomorphic
to $\mathfrak{sl}_{2}$. This means that this subalgebra contains an element $X$, such that $[U,X]=U$. Let $e^{tU}$ be a unitary operator of the Hilbert space $L^{2}(M,vol)$.  Thus, if the unipotent flow generated by $U$, $f \circ \phi_{t}^{U} = e^{tU}f$, $f \in L^{2}(M,vol)$, is ergodic, then from Lemma 5.1 in  \cite{moore}, it has purely absolutely continuous spectrum on
\[
L^{2}_{0}(M,vol)=\left \{ f \in L^{2}(M) \> | \int_{M} f \> vol=0 \right \}.
\]
\noindent Let $\tau: M \times \mathbb{R} \to \mathbb{R}$ such that  $\tau \in C^{\infty}(M, \mathbb{R})$ and
\[
\tau(x,t+t')=\tau(x,t)+\tau(\phi_{t}^{U}(x),t').
\]
\noindent Let $\alpha: M \to \mathbb{R}^{+}$, be the infinitesimal generator of $\tau$, such that $\alpha \in C^{\infty}(M)$ and
\[
\int_{M} \alpha vol = \int_{M} vol_{\alpha}=1
\]
where $vol$ is the $\phi_{t}^{U}$-invariant volume form and $vol_{\alpha}$ is the $\phi_{t}^{U_{\alpha}}$-invariant volume form.
\vspace{0.2cm}
\noindent Now we consider a time-changed flow, $\{\phi^{U_{\alpha}}_{t}\}$, generated by
\[
U_{\alpha}=:U / \alpha.
\]
\noindent Let $e^{tU_{\alpha}}$ be a unitary operator on the Hilbert space $L^{2}(M, vol_{\alpha})$. The following formulas hold on $D = C^{\infty}(M)$.
\[
[X,U_{\alpha}]=G(\alpha)U_{\alpha}=(\frac{X\alpha}{\alpha}-1)U_{\alpha}=H
\]
\[
e^{-tU_{\alpha}}\left [X,e^{tU_{\alpha}} \right ]=G(\alpha, t)U_{\alpha}=\boxed{\left (\int_{0}^{t} (\frac{X\alpha}{\alpha}-1) \circ \phi_{\tau}^{U_{\alpha}}(x) d\tau \right )U_{\alpha}=H(t).}
\]

%
The ergodicity of $\phi_{t}^{U_{\alpha}}$ gives us the following limit a.e.,
\[
\begin{split}
\lim_{t \to \infty} \frac{G(\alpha,t)}{t}
=& \lim_{t \to \infty} \frac{1}{t}\int_{0}^{t} \left (\frac{X\alpha}{\alpha}-1 \right ) \circ \phi_{\tau}^{U_{\alpha}}(x) d\tau\\
= & \lim_{t \to \infty}\left ( \frac{1}{t}\int_{0}^{t} \frac{X\alpha}{\alpha} \circ \phi_{\tau}^{U_{\alpha}}(x) d\tau +\frac{1}{t}\int_{0}^{t} -1 \circ \phi_{\tau}^{U_{\alpha}}(x) d\tau \right )\\
=& \lim_{t \to \infty}\left ( \frac{1}{t}\int_{0}^{t} \frac{X\alpha}{\alpha} \circ \phi_{\tau}^{U_{\alpha}}(x) d\tau -1 \right )\\
=&\int_{M} \frac{X\alpha}{\alpha}\> dvol_{\alpha} -1=-1.
\end{split}
\]
\noindent From the Dominated Convergence Theorem, with dominating function $2 \Vert G(\alpha) \Vert_{\infty}$, we have convergence in $L^{2}(M)$. Thus, for $u \in C^{\infty}(M)$,
\[
\lim_{t \to \infty} \frac{G(\alpha,t)}{t}U_{\alpha} u   = \boxed{ -U_{\alpha}u=Hu.}
\]
\noindent Lastly,
\[
\begin{split}
\int_{M}e^{tU_{\alpha}}f \> \cdot \overline{f} \> dvol_{\alpha}
=  \int_{M}e^{tU_{\alpha}}f \> \cdot \overline{\alpha f} \> dvol\\
 = &\int_{M} e^{sX}e^{tU_{\alpha}}f \> \cdot \overline{e^{sX}\alpha f} \> dvol\\
 = & \int_{M} e^{sX}e^{tU_{\alpha}}f \> \cdot \overline{\frac{1}{\alpha}e^{sX}\alpha f} \> dvol_{\alpha}.
 \end{split}
\]
\noindent So if we integrate both sides  of
\[
\langle e^{tU_{\alpha}}f,f \rangle _{L^{2}(M, vol_{\alpha})} = \langle e^{sX}e^{tU_{\alpha}}f, \frac{1}{\alpha}e^{sX}\alpha f \rangle _{L^{2}(M, vol_{\alpha})}
\]
\noindent with respect to $s$, we obtain the following equality
\[
\begin{split}
\Vert \; \langle e^{tU_{\alpha}}f,f \rangle _{L^{2}(M, vol_{\alpha})}\; \Vert_{L^{2}(\mathbb{R}, dt)}\; \\
= & \; \Vert \frac{1}{\sigma}\int_{0}^{\sigma} \langle e^{sX}e^{tU_{\alpha}}f, \frac{1}{\alpha}e^{sX}\alpha f \rangle _{L^{2}(M, vol_{\alpha})}\> \> ds \> \>  \Vert_{L^{2}(\mathbb{R}, dt)}.
\end{split}
\]
\noindent Thus, the preliminary assumptions for Theorem \ref{main}  are satisfied with $B_{1}=\frac{1}{\alpha}I$ and $B_{2}=\alpha I$.


\begin{theorem} \label{unipotent}

\noindent \textbf{a.} Any smooth time-change of an ergodic flow on $M$ generated by a nilpotent element of a semisimple Lie algebra  has absolutely continuous spectrum on $L^{2}_{0}(M,vol_{\alpha})$ if $\Vert  \frac{X\alpha}{\alpha} \Vert_{\infty} < 1 $.

\vspace{0.2cm}

\noindent \textbf{b.} Any smooth time-change of a uniquely ergodic flow on $M$ generated by a nilpotent element of a semisimple Lie algebra  has absolutely continuous spectrum on $L^{2}_{0}(M,vol_{\alpha})$.

\end{theorem}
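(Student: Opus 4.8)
The plan is to obtain Theorem~\ref{unipotent} by applying Theorem~\ref{main} with $\mathcal{H}=L^{2}_{0}(M,vol_{\alpha})$, $U=U_{\alpha}$, $D=C^{\infty}(M)$, $X$ the element fixed above (so that $[X,U_{\alpha}]=G(\alpha)U_{\alpha}$ with $G(\alpha)=\tfrac{X\alpha}{\alpha}-1$), $H=-U_{\alpha}$, $H(t)=G(\alpha,t)U_{\alpha}$ with $G(\alpha,t)=\int_{0}^{t}G(\alpha)\circ\phi^{U_{\alpha}}_{\tau}\,d\tau$, $\beta=1>\tfrac12$, and $B_{1}=\tfrac1\alpha I$, $B_{2}=\alpha I$. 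The preliminary assumptions of Theorem~\ref{main} were already checked in the discussion above (in particular $\tfrac1t H(t)u\to Hu$ in $\mathcal H$, $\overline{Ran(H)}=\overline{Ran(U_{\alpha})}=\mathcal H$ by ergodicity of $\phi^{U_{\alpha}}_{t}$, and the $L^{2}(\mathbb R)$ inequality with these $B_{1},B_{2}$), so it remains to verify conditions (i)--(iii).

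The structural point I would exploit is that on the dense subspace $Ran(H)=Ran(U_{\alpha})$ each of the three operators appearing in (i)--(iii) is a multiplication operator, and hence extends to $\mathcal H$ as one, with $\|\cdot\|_{op}$ equal to the $L^{\infty}$-norm of the multiplier: using $[X,M_{\psi}]=M_{X\psi}$ and $[fU_{\alpha},U_{\alpha}]=-(U_{\alpha}f)U_{\alpha}$ one gets $\tfrac1t H(t)H^{-1}=M_{-G(\alpha,t)/t}$, $[X,\tfrac1t H(t)H^{-1}]=M_{-t^{-1}XG(\alpha,t)}$, and $[H(t),H]H^{-1}=M_{-U_{\alpha}G(\alpha,t)}$. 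Thus (i)--(iii) reduce to uniform-in-$t$ bounds for $\|G(\alpha,t)/t\|_{\infty}$, $\|t^{-1}XG(\alpha,t)\|_{\infty}$, $\|U_{\alpha}G(\alpha,t)\|_{\infty}$, together with the contraction estimate $\limsup_{t\to\infty}\|1+G(\alpha,t)/t\|_{\infty}<1$ in (i). (All the $L^{\infty}$-norms below are finite because $M$ is compact in case~\textbf{b}; in case~\textbf{a} one assumes, in addition to $\|\tfrac{X\alpha}{\alpha}\|_{\infty}<1$, that $G(\alpha)$, $XG(\alpha)$, $U_{\alpha}G(\alpha)$ are bounded.)

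Two of these are immediate. As $G(\alpha,t)$ is a Birkhoff integral, $|G(\alpha,t)/t|\le\|G(\alpha)\|_{\infty}$; differentiating it along the flow gives the telescoping identity $U_{\alpha}G(\alpha,t)=G(\alpha)\circ\phi^{U_{\alpha}}_{t}-G(\alpha)$, bounded by $2\|G(\alpha)\|_{\infty}$, which is condition (iii). For the contraction estimate, $1+G(\alpha,t)/t=\tfrac1t\int_{0}^{t}\big(\tfrac{X\alpha}{\alpha}\big)\circ\phi^{U_{\alpha}}_{\tau}\,d\tau$, so its $L^{\infty}$-norm is $\le\|\tfrac{X\alpha}{\alpha}\|_{\infty}<1$ in case~\textbf{a}, whereas in case~\textbf{b} unique ergodicity of $\phi^{U_{\alpha}}_{t}$ forces these Birkhoff averages to converge uniformly to $\int_{M}\tfrac{X\alpha}{\alpha}\,vol_{\alpha}=0$ (as noted above), so the limsup is $0<1$. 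This establishes (i), and it is the only place where the hypotheses of \textbf{a} and \textbf{b} are used differently.

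The main obstacle is condition (ii): controlling $\|XG(\alpha,t)\|_{\infty}$ \emph{linearly} in $t$. Differentiating $G(\alpha,t)$ in the transverse direction $X$ brings in the transverse cocycle of $G(\alpha)$, whose linear growth naively yields only $O(t^{2})$, which is not enough. To beat this I would first solve the linear ODE $\tfrac{d}{dt}(\phi^{U_{\alpha}}_{t})_{*}X=(\phi^{U_{\alpha}}_{t})_{*}[X,U_{\alpha}]=\big(G(\alpha)\circ\phi^{U_{\alpha}}_{-t}\big)U_{\alpha}$ (using $[X,U_{\alpha}]=G(\alpha)U_{\alpha}$ and flow-invariance of $U_{\alpha}$), substitute the resulting expression for $(\phi^{U_{\alpha}}_{t})_{*}X$ into $\partial_{t}\,XG(\alpha,t)=\big(((\phi^{U_{\alpha}}_{t})_{*}X)G(\alpha)\big)\circ\phi^{U_{\alpha}}_{t}$, rewrite the backward cocycle in terms of the forward one $G(\alpha,\tau)$, and integrate by parts in $\tau$. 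This exhibits $XG(\alpha,t)$ as the sum of the Birkhoff integral $\int_{0}^{t}(XG(\alpha))\circ\phi^{U_{\alpha}}_{\tau}\,d\tau$, the boundary term $G(\alpha,t)\big(G(\alpha)\circ\phi^{U_{\alpha}}_{t}\big)$, and $-\int_{0}^{t}\big(G(\alpha)\circ\phi^{U_{\alpha}}_{\tau}\big)^{2}\,d\tau$, each of which is $O(t)$; hence $\|t^{-1}XG(\alpha,t)\|_{\infty}\le\|XG(\alpha)\|_{\infty}+2\|G(\alpha)\|_{\infty}^{2}$, giving (ii). With (i)--(iii) verified, Theorem~\ref{main} yields that every spectral measure $\mu_{f}$, $f\in L^{2}_{0}(M,vol_{\alpha})$, is absolutely continuous, which is the assertion of Theorem~\ref{unipotent}.
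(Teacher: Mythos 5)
Your proposal is correct and follows essentially the same route as the paper: same choice of $H=-U_\alpha$, $H(t)=G(\alpha,t)U_\alpha$, $B_1=\tfrac1\alpha I$, $B_2=\alpha I$, the same reduction of (i)–(iii) to $L^\infty$ bounds on multipliers, and the same shear/integration-by-parts trick on the derivative cocycle $D\phi_\tau^{U_\alpha}(X)$ to make $\|t^{-1}XG(\alpha,t)\|_\infty$ uniformly bounded. The only cosmetic difference is that you carry $G(\alpha)=\tfrac{X\alpha}{\alpha}-1$ through both factors in the integration by parts while the paper keeps $\tfrac{X\alpha}{\alpha}$ in one of them; the two expressions are algebraically identical since the resulting discrepancies (one $G(\alpha,t)$ in the boundary term, one in the product integral) cancel.
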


\begin{remark} The condition in part \textbf{a.} is equivalent to the condition employed by Kushnirenko \cite{kush} (Theorem 2) to prove mixing for the time-changes of the horocycle flow. As shown by Marcus, this condition is unnecessarily restrictive. In the compact setting, the authors in \cite{forniulci} prove spectral results using the implicit unique ergodicity instead of requiring such a condition. The author in \cite{tiedrahor} proves similar spectral results by imposing this Kushnirenko condition; the author later substitutes this condition by a utilization of unique ergodicity \cite{tiedra} in the compact case. In the noncompact setting it remains open as to whether or not spectral results can be derived without imposing a Kushnirenko-type condition.
\end{remark}

\begin{proof}


\noindent We show that the conditions of Theorem \ref{main} hold.

\vspace{0.2cm}

\noindent \textbf{a.}
\noindent $(i)$
\noindent Let $f=U_{\alpha}g$ for $g \in C^{\infty}(M)$.
\[
\begin{split}
\left \Vert \frac{H(t)}{t}H^{-1}f \right \Vert_{L^{2}(M, vol_{\alpha})}
=&\left \Vert \frac{G(\alpha,t)}{t}U_{\alpha}(-U_{\alpha}^{-1}f) \right\Vert_{L^{2}(M, vol_{\alpha})}\\
=&\left \Vert \frac{G(\alpha,t)}{t}f \right \Vert_{L^{2}(M, vol_{\alpha})}\\
\leq & \> 2 \left \Vert G(\alpha) \right \Vert_{\infty} \cdot \Vert f \Vert_{L^{2}(M, vol_{\alpha})}\\
\leq & \> 2 \Vert f \Vert_{L^{2}(M, vol_{\alpha})}
\end{split}
\]
\noindent Since the above holds for $f \in Ran(U_{\alpha})$, $\frac{H(t)}{t}H^{-1}$ extends to  a bounded operator on $\overline{Ran(U_{\alpha})}=L^{2}_{0}(M,vol_{\alpha})$ with uniformly bounded norm in $t$,

\ \vspace*{-10pt}
\[
\left \Vert \frac{H(t)}{t}H^{-1} \right \Vert_{op}\leq2.
\]


Also,
\[
\begin{split}
\left \Vert (I - \frac{H(t)}{t}H^{-1})f \right  \Vert_{L^{2}(M,vol_{\alpha})}
=&\left \Vert \left (1+\frac{G(\alpha,t)}{t} \right )f  \right \Vert_{L^{2}(M,vol_{\alpha})}\\
=&\left \Vert \left (1+ \left (\frac{1}{t}\int_{0}^{t} (\frac{X\alpha}{\alpha}-1) \circ \phi_{\tau}^{U_{\alpha}}(x) d\tau \right ) \right )f \right \Vert_{L^{2}(M,vol_{\alpha})}\\
=&\left \Vert \left (\frac{1}{t}\int_{0}^{t} \frac{X\alpha}{\alpha} \circ \phi_{\tau}^{U_{\alpha}}(x) d\tau \right )f \right  \Vert_{L^{2}(M,vol_{\alpha})}\\
\leq & \left \Vert \left (\frac{1}{t}\int_{0}^{t} \frac{X\alpha}{\alpha} \circ \phi_{\tau}^{U_{\alpha}}(x) d\tau \right ) \right \Vert_{\infty} \cdot \Vert f \Vert_{L^{2}(M,vol_{\alpha})}\\
\leq & \left \Vert \frac{X\alpha}{\alpha} \right \Vert_{\infty}\cdot \left \Vert f \right \Vert_{L^{2}(M,vol_{\alpha})}\\
\leq & \> \Vert f \Vert_{L^{2}(M,vol_{\alpha})}.
\end{split}
\]
 \noindent Since the above holds on $Ran(U_{\alpha})$ the following is true on $\overline{Ran(U_{\alpha})}=L^{2}_{0}(M,vol_{\alpha}),$
\[
\limsup_{t \to \infty} \left \Vert I - \frac{H(t)}{t}H^{-1} I \right \Vert_{op}=\limsup_{t \to \infty} \left \Vert I + \frac{G(\alpha,t)}{t} I \right \Vert_{op} < 1.
\]


\noindent $\textit{(ii)}$ In the following calculation we use that
\[
D\phi_{t}^{U_{\alpha}}(X)=G(\alpha,t)U_{\alpha}\circ \phi_{t}^{U_{\alpha}}+ X \circ \phi_{t}^{U_{\alpha}}
\]
\noindent where $D\phi_{t}^{U_{\alpha}}$ denotes the differential of the diffeomorphism $\phi_{t}^{U_{\alpha}}$.
\[
\begin{split}
\left [X, \frac{H(t)}{t}H^{-1} \right ]
=& X\left (\frac{1}{t}\int_{0}^{t}(\frac{X\alpha}{\alpha}-1)\circ \phi_{\tau}^{U_\alpha} d\tau \right )-\left (\frac{1}{t}\int_{0}^{t}(\frac{X\alpha}{\alpha}-1)\circ \phi_{\tau}^{U_\alpha} d\tau \right )X\\
=&\frac{1}{t}\int_{0}^{t}\left (D\phi_{\tau}^{U_{\alpha}}(X) \circ \phi_{-\tau}^{U_\alpha} \right )\left (\frac{X\alpha}{\alpha} \right ) \circ \phi_{\tau}^{U_\alpha} d\tau\\
=&\frac{1}{t}\int_{0}^{t}\left (G(\alpha,\tau)U_{\alpha}+X \right )\left (\frac{X\alpha}{\alpha} \right ) \circ \phi_{\tau}^{U_\alpha} d\tau\\
=&\frac{1}{t}\int_{0}^{t}G(\alpha,\tau)U_{\alpha}\left (\frac{X\alpha}{\alpha} \right )\circ \phi_{\tau}^{U_\alpha}(x)d\tau  + \frac{1}{t}\int_{0}^{t}X \left (\frac{X\alpha}{\alpha} \right )\circ \phi_{\tau}^{U_\alpha}(x)d\tau\\
=&\frac{1}{t}\int_{0}^{t}G(\alpha,\tau)\frac{d}{d\tau}\left (\frac{X\alpha}{\alpha} \right )\circ \phi_{\tau}^{U_\alpha}(x)d\tau  + \frac{1}{t}\int_{0}^{t}X \left (\frac{X\alpha}{\alpha} \right )\circ \phi_{\tau}^{U_\alpha}(x)d\tau
\end{split}
\]
\noindent We integrate
\[
\frac{1}{t}\int_{0}^{t}G(\alpha,\tau)\frac{d}{d\tau}\left (\frac{X\alpha}{\alpha} \right )\circ \phi_{\tau}^{U_{\alpha}}(x)d\tau
\]
\noindent by parts,
\[
\begin{split}
\frac{1}{t}\int_{0}^{t}G(\alpha,\tau)\frac{d}{d\tau}\left (\frac{X\alpha}{\alpha} \right )\circ \phi_{\tau}^{U_{\alpha}}(x)d\tau\\
= \> \frac{G(\alpha,t)}{t} \left (\frac{X\alpha}{\alpha} \right )&\circ \phi_{t}^{U_\alpha} - \frac{1}{t}\int_{0}^{t} (\frac{X\alpha}{\alpha}-1)(\frac{X\alpha}{\alpha}) \circ \phi_{\tau}^{U_{\alpha}}(x)d\tau,
\end{split}
\]
\noindent and obtain the bound,
\[
\begin{split}
&\left \Vert \frac{1}{t}\int_{0}^{t}(G(\alpha,\tau)U_{\alpha}+X)\left (\frac{X\alpha}{\alpha} \right ) \circ \phi_{\tau}^{U_{\alpha}} d\tau \right \Vert_{\infty}
\\\leq & \left \Vert \frac{G(\alpha,t)}{t}\left (\frac{X\alpha}{\alpha}\right )\right \Vert_{\infty}\\
+& \left \Vert \frac{1}{t}\int_{0}^{t} (\frac{X\alpha}{\alpha}-1)\left (\frac{X\alpha}{\alpha} \right ) \circ \phi_{\tau}^{U_{\alpha}}(x)d\tau \right \Vert_{\infty}\\
+& \left \Vert \frac{1}{t}\int_{0}^{t}X\left (\frac{X\alpha}{\alpha}\right )\circ \phi_{\tau}^{U_{\alpha}}(x)d\tau \right \Vert_{\infty}\\
\leq & \> 2 \> \cdot \left \Vert \frac{X\alpha}{\alpha}-1 \right \Vert_{\infty} \cdot \left \Vert \left (\frac{X\alpha}{\alpha} \right ) \right \Vert_{\infty}\\
+ & \left \Vert X\left (\frac{X\alpha}{\alpha} \right) \right \Vert_{\infty}\\
\leq& \> 2(2) + C_{\alpha}''
\end{split}
\]
\noindent where $C_{\alpha}''$ depends on the second derivative of $\alpha$.


 Since
\[
\left [X, \frac{H(t)}{t}H^{-1} \right ]
\]
\noindent is the multiplication operator given by
\[
\left (\frac{1}{t}\int_{0}^{t}(G(\alpha,\tau)U_{\alpha}+X)\left (\frac{X\alpha}{\alpha}\right ) \circ \phi_{\tau}^{U_{\alpha}} d\tau \right )\cdot I,
\]
\noindent we obtain the following bound,
\[
\begin{split}
&\left \Vert \left [X, \frac{H(t)}{t}H^{-1} \right ] f \right \Vert_{L^{2}(M,vol_{\alpha})}\\
\leq & \left \Vert \frac{1}{t}\int_{0}^{t}(G(\alpha,\tau)U_{\alpha}+X)\left (\frac{X\alpha}{\alpha} \right ) \circ \phi_{\tau}^{U_{\alpha}} d\tau \right \Vert_{\infty} \cdot \Vert f
\Vert_{L^{2}(M,vol_{\alpha})}\\
 \leq & (4 + C_{\alpha}'') \cdot \Vert f
\Vert_{L^{2}(M,vol_{\alpha})}.
\end{split}
\]
\noindent Thus, $\left [X, \frac{H(t)}{t}H^{-1} \right ]$ extends to a bounded operator on $\overline{Ran(U_{\alpha})}$ with operator norm uniformly bounded in $t$:
\[
\left \Vert \left [X, \frac{H(t)}{t}H^{-1} \right ] \right \Vert_{op} \leq 4 + C_{\alpha}''.
\]

\noindent $\textit{(iii)}$
\[
\begin{split}
\left \Vert \left [H(t),H \right ]H^{-1}f \right \Vert_{L^{2}(M, vol_{\alpha})}
=&\left \Vert \left [G(\alpha,t)U_{\alpha},-U_{\alpha} \right ](-U_{\alpha}^{-1}f) \right \Vert_{L^{2}(M, vol_{\alpha})}\\
=&\left \Vert \left [U_{\alpha}(\int_{0}^{t} (\frac{X\alpha}{\alpha}-1) \circ \phi_{\tau}^{U_{\alpha}} d\tau) \right ] \cdot f \right \Vert_{L^{2}(M, vol_{\alpha})}\\
=& \left \Vert \left [\int_{0}^{t} \frac{d}{d\tau}(\frac{X\alpha}{\alpha}-1) \circ \phi_{\tau}^{U_{\alpha}} d\tau) \right ] \cdot f \right \Vert_{L^{2}(M, vol_{\alpha})}\\
\leq & \> \Vert G(\alpha)\circ \phi_{t}^{U_{\alpha}} -G(\alpha) \Vert_{\infty} \cdot \Vert f \Vert_{L^{2}(M, vol_{\alpha})}\\
\leq &  \> 2 \Vert G(\alpha) \Vert _{\infty} \cdot \Vert f \Vert_{L^{2}(M, vol_{\alpha})}\\ \leq & \> 2(2) \cdot \Vert  f \Vert_{L^{2}(M, vol_{\alpha})}
\end{split}
\]


The above holds on coboundaries of the form $f=U_{\alpha}g$,  so on $\overline{Ran(U_\alpha)}=L^{2}_{0}(M, vol_{\alpha})$,
\[
\left \Vert [H(t),H]H^{-1} \right \Vert_{op} \leq 4.
\]

Since conditions $(i)$, $(ii)$, and $(iii)$ of Theorem \ref{main}. are satisfied on $Ran(U_{\alpha})$, the time-changed flow, $\{\phi^{U_{\alpha}}_{t}\}$, has purely absolutely continuous spectrum on $\overline{Ran(U_{\alpha})}$. Since $\{\phi^{U_{\alpha}}_{t}\}$ is ergodic,  $\overline{Ran(U_{\alpha})}=L^{2}_{0}(M,vol_{\alpha})$.

\vspace{0.2cm}
%
This concludes the proof of part \textbf{a}.

\vspace{0.2cm}
\noindent \textbf{b}. Now we assume that the flow $\{ \phi_{t}^{U} \}$, and hence $\{ \phi_{t}^{U_{\alpha}} \}$, are uniquely ergodic.
\vspace{0.2cm}\[
\begin{split}
&\left \Vert \left (I - \frac{H(t)}{t}H^{-1} \right )f \>  \right \Vert_{L^{2}(M,vol_{\alpha})}\\[2mm]
=&\left \Vert \left(1+\frac{G(\alpha,t)}{t} \right )f \> \right \Vert_{L^{2}(M,vol_{\alpha})}\\[2mm]
=&\left \Vert \left (1+ \left (\frac{1}{t}\int_{0}^{t} (\frac{X\alpha}{\alpha}-1) \circ \phi_{\tau}^{U_{\alpha}}(x) d\tau \right ) \right )f \> \right \Vert_{L^{2}(M,vol_{\alpha})}\\[2mm]
=&\left \Vert \left (\frac{1}{t}\int_{0}^{t} \frac{X\alpha}{\alpha} \circ \phi_{\tau}^{U_{\alpha}}(x) d\tau \right ) f \> \right \Vert_{L^{2}(M,vol_{\alpha})}\\[2mm]
\leq & \left \Vert \left (\frac{1}{t}\int_{0}^{t} \frac{X\alpha}{\alpha} \circ \phi_{\tau}^{U_{\alpha}}(x) d\tau \right ) \right \Vert_{\infty} \cdot \Vert f \Vert_{L^{2}(M,vol_{\alpha})}.
\end{split}
\]
\noindent If $\{ \phi_{t}^{U_{\alpha}} \}$ is uniquely ergodic, then the following converges uniformly,
\vspace{0.2cm}\[
\lim_{t \to \infty} \frac{1}{t}\int_{0}^{t} \frac{X\alpha}{\alpha}\circ \phi_{\tau}^{U_{\alpha}}d\tau = \int_{M}\frac{X\alpha}{\alpha} \> dvol_{\alpha} = 0,
\]
\noindent and thus,
\vspace{0.2cm}\[
\begin{split}
\limsup_{t \to \infty} \left \Vert I + \frac{H(t)}{t}H^{-1}  \right \Vert_{op}
\leq & \limsup_{t \to \infty} \left \Vert \left (\frac{1}{t}\int_{0}^{t} \frac{X\alpha}{\alpha} \circ \phi_{\tau}^{U_{\alpha}}(x) d\tau \right ) \right \Vert_{\infty}\\[2mm]
=&\left \Vert \int_{M}\frac{X\alpha}{\alpha} \> dvol_{\alpha} \right \Vert_{\infty}=0.
\end{split}
\]
\noindent Hence,
\vspace{0.2cm}\[
\limsup_{t \to \infty} \left \Vert I + \frac{H(t)}{t}H^{-1}  \right \Vert_{op}<1
\]
\noindent is satisfied on $\overline{Ran(U_{\alpha})}=L^{2}_{0}(M,vol_{\alpha})$ without imposing any further conditions on $\frac{X\alpha}{\alpha}$. The remainder of the proof is the same as in \textbf{a} except that
\[
\left \Vert \frac{X\alpha}{\alpha} \right \Vert_{\infty} \leq  C_{\alpha}' < \infty
\]
\noindent where $C_{\alpha}'$ is finite but not necessarily equal to $1$.
\end{proof}

\begin{theorem}[Maximal Spectral Type]
The maximal spectral type of the uniqely ergodic flow $\{\phi^{U_{\alpha}}_{t}\}$ is Lebesgue on the subspace $\overline{Ran(U_{\alpha})}$.
\end{theorem}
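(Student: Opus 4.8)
The plan is to upgrade the absolute continuity statement of Theorem \ref{unipotent} to the sharp conclusion that the maximal spectral type is \emph{equivalent} to Lebesgue measure on $\overline{Ran(U_\alpha)}$. By the proof of Theorem \ref{main}, we already have the quantitative decay $\hat\mu_f(t)=O(t^{-\beta})$ for every $f$ of the form $f=Hg=-U_\alpha g$ with $g\in C^\infty(M)$, and here $\beta=1$. So each such $\mu_f$ is not merely absolutely continuous but has a density in a better class; in particular $\hat\mu_f\in L^2(\mathbb R)$, which is exactly what is needed to say the density is in $L^2(\mathbb R,d\xi)$. The point of the theorem is that these densities are, up to bounded-above-and-below factors, the constant function: i.e.\ the spectral type is \emph{all} of Lebesgue, not just absolutely continuous with respect to it.

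The approach I would take follows the scheme of \cite{forniulci}. First I would fix a suitable dense family of coboundaries: vectors $f=U_\alpha g$ for $g$ ranging over a dense subspace of $C^\infty(M)$, and observe (as in the proof of Theorem \ref{unipotent}) that $\overline{Ran(U_\alpha)}=L^2_0(M,vol_\alpha)$ by ergodicity, so these $f$ are dense. Next, the key analytic input: rerun the bootstrap/integration-by-parts estimate from the proof of Theorem \ref{main}, but now keeping track of \emph{lower} bounds as well. Concretely, the identity
\[
\langle e^{tU_\alpha}f,f\rangle_{L^2(vol_\alpha)}=\Big\langle e^{sX}e^{tU_\alpha}f,\tfrac1\alpha e^{sX}\alpha f\Big\rangle_{L^2(vol_\alpha)}
\]
together with the operator $H(t)=G(\alpha,t)U_\alpha$ and the renormalization $H(t)/t\to H=-U_\alpha$ lets one express $\hat\mu_f(t)$ modulo an $O(t^{-1-\epsilon})$ (or at least $o(t^{-1})$) error in terms of a principal oscillatory term. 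The goal is to show that $\xi\,\hat\mu_f(\xi/\,\cdot\,)$ — more precisely the rescaled correlation — converges, so that the density $d\mu_f/d\xi$ is continuous and strictly positive on a full-measure set, hence bounded below by a positive constant times $\mathbf 1$ near the relevant part of the spectrum. One then patches: since $f=U_\alpha g$ and one can choose $g$ so that $\mu_g$ (equivalently $\mu_f$ after dividing by $|\xi|^2$-type weights coming from $U_\alpha$) has density bounded below on any prescribed compact set, a countable union of such $f$'s exhausts $\mathbb R\setminus\{0\}$ and shows the maximal spectral type dominates Lebesgue; combined with the already-proven absolute continuity, the two measures are equivalent.

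Let me be more explicit about the mechanism I expect to use. In the compact uniquely ergodic case one has the uniform convergence $\frac1t\int_0^t\frac{X\alpha}{\alpha}\circ\phi^{U_\alpha}_\tau\,d\tau\to 0$, which was what made condition $(i)$ of Theorem \ref{main} hold with room to spare; the same uniform ergodicity should give, via the shear/renormalization argument, not just $O(t^{-1})$ but an asymptotic of the form $\hat\mu_f(t)\sim c(f)\,t^{-1}$ (up to oscillation) for a family of $f$ with $c(f)\neq 0$. Inverting the Fourier transform, $t^{-1}$-decay with a genuine nonvanishing leading coefficient corresponds to a density that does not vanish, and the absence of any singular part (already established) pins the maximal spectral type to Lebesgue. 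I would cite the corresponding argument in \cite{forniulci} for the horocycle time-changes and note that the present setup is literally the same once the formulas $[X,U_\alpha]=G(\alpha)U_\alpha$ and $e^{-tU_\alpha}[X,e^{tU_\alpha}]=G(\alpha,t)U_\alpha$ are in hand, since these encode exactly the shear structure of Marcus's mechanism.

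The main obstacle, and the step requiring the most care, is establishing the \emph{lower} bound on the spectral density rather than just the decay rate: the estimates in the proof of Theorem \ref{main} are all one-sided (upper bounds on $\|\int_0^\sigma e^{sX}e^{tU}f\,ds\|$), and to conclude the maximal spectral type is \emph{all} of Lebesgue one must show the principal term genuinely does not cancel for enough test vectors. Concretely this means showing that for $f=U_\alpha g$ the rescaled limit of $t\,\hat\mu_f(t)$ (in an appropriate averaged sense, e.g.\ the limit of $\frac1R\int_R^{2R}|\,t\,\hat\mu_f(t)|^2\,dt$) is bounded below, which reduces — via the shear computation — to a non-degeneracy statement for the pushforward of $vol_\alpha$ under the shear vector field $U_\alpha$ acting on $g$; unique ergodicity is what prevents this from degenerating. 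Once this quantitative non-vanishing is secured for a dense family, the patching argument (choosing $f_n$ whose densities are uniformly positive on $[-n,-1/n]\cup[1/n,n]$) is routine, and the equivalence with Lebesgue follows.
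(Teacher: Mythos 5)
Your proposal takes a fundamentally different route from the paper's, and the route has a gap at its crux that I don't think can be filled as described. You acknowledge that the estimates in Theorem~\ref{main} are one-sided upper bounds, and that the ``main obstacle'' is to establish a \emph{lower} bound, e.g.\ $\hat\mu_f(t)\sim c(f)\,t^{-1}$ with $c(f)\neq 0$, or a lower bound on the averaged quantity $\tfrac1R\int_R^{2R}|t\hat\mu_f(t)|^2\,dt$. But you never supply an argument for this, only the assertion that it ``reduces to a non-degeneracy statement for the pushforward of $vol_\alpha$ under the shear vector field.'' There is no mechanism in the bootstrap / integration-by-parts scheme that produces lower bounds: you would need to show the principal oscillatory term does not cancel, which is not addressed. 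Moreover, the inference ``$t^{-1}$-decay with nonvanishing leading coefficient $\Rightarrow$ density bounded below'' is not correct: $\chi_{[0,1]}$ has Fourier transform of exact order $t^{-1}$ with nonvanishing leading coefficient, yet its density vanishes on a set of full measure in $\mathbb{R}$. Leading-order decay controls averages of the density, not pointwise positivity, and does not rule out gaps in the support. So even granted the unproved lower bound, the patching step (choosing $f_n$ with densities bounded below on $[-n,-1/n]\cup[1/n,n]$) would not follow.

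The paper's actual argument is by contradiction and avoids lower bounds entirely, following \cite{forniulci}. Suppose the maximal spectral type is not Lebesgue. Then there is a compact $A\subset\mathbb{R}$ with positive Lebesgue measure but null for the maximal spectral type. Take $\omega$ to be the complex conjugate of $\widehat{\chi_A}$; the identity $\int_{\mathbb{R}}\omega(t)\,\langle e^{tU_\alpha}f,h\rangle\,dt=\int_{\mathbb{R}}\chi_A(\xi)\,d\mu_{f,h}(\xi)=0$ holds for $f,h\in Ran(U_\alpha)$ because the joint spectral measures are absolutely continuous (which is the already-proved upper-bound content of Theorems~\ref{main} and~\ref{unipotent}). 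The estimate $\sup_{s\in[0,\sigma]}\|\int_0^\sigma e^{sX}e^{tU_\alpha}U_\alpha g\,ds\|=O(t^{-1})$ ensures the iterated integral converges and the order of integration can be exchanged, yielding $\int_{\mathbb{R}}\omega(t)\int_0^\sigma e^{sX}e^{tU_\alpha}U_\alpha g\,ds\,dt=0$ for all $g\in C^\infty(M)$. The second lemma then shows, by choosing $g$ supported in a flow-box $E^T_{\rho,\sigma}=(\phi^{U_\alpha}_t\circ\phi^X_s\circ\phi^V_r)(x)$ and letting $\rho\to0^+$, that unique ergodicity forces the return time $T_{\rho,\sigma}\to\infty$, so the tail contribution vanishes and one is left with $\int_{\mathbb{R}}\omega(t)\,\psi'(t)\,dt=0$ for all test $\psi$, hence $\omega\equiv0$ --- a contradiction. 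Notice this is a \emph{duality} argument (no nonzero $L^2$ function can annihilate all correlation coboundaries), not a pointwise lower bound on densities. Your proposal identifies the right ingredients (coboundaries, shear structure, unique ergodicity, the $O(t^{-1})$ decay) but misses the essential reformulation that turns an upper-bound estimate into a statement about the maximal spectral type.
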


\begin{proof}

\noindent We follow the method in \cite{forniulci}.

\begin{lemma}\label{lem} \cite{forniulci} Suppose that the maximal spectral type of $\{ \phi^{U_{\alpha}}_{t} \}$ is not Lebesgue. Then there exists a smooth non-zero function $\omega \in L^{2}(\mathbb{R},dt)$ such that for all functions $g \in C^{\infty}(M)$ the following holds:
$$
\int_{\mathbb{R}} \omega(t) \int^{\sigma}_{0} e^{sX}e^{tU_{\alpha}}U_{\alpha}g\> ds\> dt=0
$$

\end{lemma}

\begin{proof}
Since the maximal spectral type is not Lebesgue, then there exists a compact set $A \subset \mathbb{R}$ such that $A$ has positive Lebesgue measure but measure $0$ with respect to the maximal spectral type. So we let $\omega \in L^{2}(\mathbb{R})$ be the complex conjugate of the Fourier transform of the characteristic function $\chi_{A}$ of the set $A \subset \mathbb{R}$. For $f,h \in Ran(U_{\alpha})$, let $\mu_{f,h}$ denote the joint spectral measure (which we know is absolutely continuous with respect to Lebesgue since $f, h \in Ran(U_{\alpha})$. Thus,
$$
\int_{\mathbb{R}} \omega(t)\langle e^{tU_{\alpha}}f, h \rangle _{L^{2}(M,vol)}dt=\int_{\mathbb{R}} \chi_{A}(\xi)d\mu_{f,h}(\xi)=0.
$$


\noindent In particular, when $f=U_{\alpha}g$ we have
\[
\begin{split}
\int_{0}^{\sigma} \int_{\mathbb{R}} \omega(t) \langle e^{sX}e^{tU_{\alpha}}U_{\alpha}g,h\rangle _{L^{2}(M,vol)}\> dt\> ds\\
= \langle \int_{\mathbb{R}} \omega(t) & \int_{0}^{\sigma} e^{sX}e^{tU_{\alpha}}U_{\alpha}g\>ds\>dt,h\rangle_{L^{2}(M,vol)}=0.
\end{split}
\]


\noindent Recall that satisfying conditions $(i)$ and $(ii)$ and $(iii)$ in Theorem \ref{main} results in the bound
\[
\begin{split}
\sup_{s \in [0, \sigma]} \left \Vert \int_{0}^{\sigma} e^{sX}e^{tU_{\alpha}}U_{\alpha}gds \right  \Vert_{L^{2}(\mathbb{R},dt)}\\
\leq & \frac{C_{ \sigma}(\alpha)}{t^{\beta}}\max \left \{ \Vert g \Vert_{L^{2}(M)}, \Vert Xg \Vert_{L^{2}(M)}, \Vert U_{\alpha}g \Vert_{L^{2}(M)} \right \}
\end{split}
\]

\noindent where $\beta =1$ and $C_{\sigma}(\alpha)$ is a constant that depends on the time-change function $\alpha$ and parameter $\sigma > 0$.


Because $$
\int_{0}^{\sigma} e^{sX}e^{tU_{\alpha}}U_{\alpha}g\>ds
$$
\noindent is bounded on $M$, it follows that
$$
\int_{\mathbb{R}} \omega(t) \int_{0}^{\sigma} e^{sX}e^{tU_{\alpha}}U_{\alpha}g\>ds\>dt
$$
\noindent vanishes.
\end{proof}


\begin{lemma} \cite{forniulci}
Let $\omega \in L^{2}(\mathbb{R},dt)$. If for some $x \in M$ and for all $g \in C^{\infty}(M)$,
$$
\int_{\mathbb{R}} \omega(t) \int_{0}^{\sigma} e^{sX}e^{tU_{\alpha}}U_{\alpha}g(x) \> ds \> dt=0,
$$
then $\omega$ vanishes identically.
\end{lemma}

\begin{proof}

Fix $x \in M$ and $\sigma > 0$.  Since $U$ is contained in a subalgebra isomorphic to $\mathfrak{sl}_{2}$, there exists an element $V$ such that $[U,V]=2X$ and $[X,V]=-V$. For any $T >0$, $\rho >0$, and $\frac{1}{2}>\gamma > 0$, let $E^{T}_{\rho,\sigma}$ be the flow-box for the the flow $\{ \phi^{U_{\alpha}}_{t} \}$ defined as follows:
\begin{center}
$E^{T}_{\rho, \sigma}=(\phi^{U_{\alpha}}_{t} \circ \phi^{X}_{s} \circ \phi^{V}_{r})(x)$, for all $(r,s,t) \in (-\gamma, \gamma) \times (-\rho, \rho) \times (-\sigma, \sigma)$.
\end{center}

\noindent For any $\chi \in C^{\infty}_{0}(-1,1)$ and any $\psi \in C^{\infty}_{0}(-T,T)$, let
$$
\tilde{g}(r,s,t):=\chi(\frac{r}{\rho}) \chi(\frac{s}{\sigma}) \psi(t).
$$


\noindent Let $g \in C^{\infty}(M)$ such that $g=0$ on $ M\setminus Im(E^{T}_{\rho, \sigma})$ and
\begin{displaymath}
   g \circ E^{T}_{\rho, \sigma} = \left\{
     \begin{array}{lr}
       0   &  $on$   \; M\setminus Im(E^{T}_{\rho, \sigma})\\
       \tilde{g}(r,s,t)   &  $on$   \; Im(E^{T}_{\rho, \sigma})
     \end{array}
   \right.
\end{displaymath}

\noindent Let $T_{\rho, \sigma} >0$ be defined as:
$$
T_{\rho, \sigma}:= \min \left \{ |t|>T: \cup _{s \in [-\sigma,\sigma ]} (\phi^{U_{\alpha}}_{t} \circ \phi^{X}_{s})(x) \cap Im(E^{T}_{\rho, \sigma}) \neq \font\cmsy = cmsy10 at 12pt
\hbox{\cmsy \char 59} \right \}.
$$

%
From unique ergodicity,
$$
lim_{\rho \to 0^{+}} T_{\rho, \sigma} = +\infty.
$$
\noindent The composition of the flow box with $U_{\alpha}g$ and $Xg$ follow from the commutation relations:
$$
(U_{\alpha}g) \circ E^{T}_{\rho, \sigma}
: = \chi\left (\frac{r}{\rho} \right ) \chi \left (\frac{s}{\sigma} \right ) \frac{d\psi(t)}{dt}(t)
$$
\noindent and
\begin{equation} \label{1}
\begin{split}
(Xg) \circ E^{T}_{\rho, \sigma} = \frac{1}{\sigma}\chi\left (\frac{r}{\rho} \right ) \frac{d\chi}{ds}\left (\frac{s}{\sigma} \right )\psi(t)\\
 -  \left ( \int_{0}^{t}  (\frac{X\alpha}{\alpha}-1)  \circ \phi_{\tau}^{U_{\alpha}}\circ \phi_{s}^{X} \circ \phi^{V}_{r}(x) d\tau \right ) & \chi\left (\frac{r}{\rho}\right ) \chi\left (\frac{s}{\sigma}\right )\frac{d\psi}{dt}(t).
\end{split}
\end{equation}

%
%
From the assumptions of Lemma \ref{lem} and by integrating \ref{1}, we have
\begin{equation} \label{2}
\begin{split}
\chi(0)\> \left (\int_{0}^{\sigma} \chi\left (\frac{s}{\sigma}\right )\>ds \right ) \left (\int_{-T}^{T}\omega(t) \frac{d\psi(t)}{dt}\>dt \right )\\
+  \int_{\mathbb{R}\setminus[-T_{\rho, \sigma}, T_{\rho, \sigma}]} & \omega(t) \int_{0}^{\sigma} e^{sX}e^{tU_{\alpha}}(U_{\alpha}g) \> ds \> dt = 0.
\end{split}
\end{equation}

%
The bound $C_{\sigma}(\alpha)$ of
$$
\int_{0}^{\sigma} e^{sX}e^{tU_{\alpha}}U_{\alpha}gds
$$
\noindent derived for the spectral results, combined with \ref{1} and \ref{2}, give us the following $L^{2}$ bound,
%
$$
\Vert \int_{0}^{\sigma} e^{sX}e^{tU_{\alpha}}U_{\alpha}gds \Vert _{L^{2}(\mathbb{R}, dt)} \leq \frac{C_{\sigma}(\alpha)}{t}\max\{ \Vert g \Vert_{\infty}, \Vert Xg \Vert_{\infty}, \Vert U_{\alpha}g \Vert_{\infty} \}
$$
$$
\leq \frac{C_{\sigma}(\alpha)}{t}\max \left \{ 1,T \right \} \times \max \hspace{0.001cm} ^{2} \left \{ \Vert \chi \Vert_{L^{\infty}(\mathbb{R})}, \Vert \chi' \Vert_{L^{\infty}(\mathbb{R})}, \Vert \psi \Vert_{L^{\infty}(\mathbb{R})}, \Vert \psi' \Vert_{L^{\infty}(\mathbb{R})} \right \}.
$$


\noindent Since the above bound is uniform with respect to $\rho$, we can conclude that the following limit holds,
%
\begin{equation} \label{3}
\lim_{\rho \to 0^{+}} \int_{\mathbb{R}\setminus [-T_{\rho, \sigma},T_{\rho, \sigma}]}\omega(t) \int_{0}^{\sigma}  e^{sX}e^{tU_{\alpha}}(U_{\alpha}g)dsdt = 0.
\end{equation}





Combining equation \ref{2} with the limit result in \ref{3} implies that
%
$$
\int_{\mathbb{R}}\omega(t) \frac{d\psi(t)}{dt}\>dt=0
$$
%
\noindent and thus, $\omega \equiv 0$.
\end{proof}
\end{proof}


\subsection{Time changes of the horocycle flow - compact  and finite area}
\noindent On $M=\Gamma \setminus PSL(2, \mathbb{R})$, where $M$ is either compact or of finite area, we consider the basis
 \[ \left\{ U=
\begin{pmatrix}
  0 & 1 \\ 0 & 0
\end{pmatrix}, \>
V=\begin{pmatrix}
  0 & 0 \\ 1 & 0
\end{pmatrix}, \>
X=\begin{pmatrix}
  \frac{1}{2} & 0 \\ 0 & -\frac{1}{2}
\end{pmatrix} \right\}
\]
\noindent of the Lie algebra $\mathfrak{sl}_{2}(\mathbb{R})$, where $U$ and $V$ are the generators of the positive and negative horocycle flows, $\{h_{t}^{U}\}$ and $\{h_{t}^{V}\}$ respectively, and $X$ is the generator of the geodesic flow, $\{\phi_{s}^{X}\}$. From \cite{amreinnelson}, we know that $iU,iV,iX$ are essentially self-adjoint on $C^{\infty}(M)$, and thus, $U,V,X$ are essentially skew-adjoint on $C^{\infty}(M)$. It follows that time-changes of the horocycle flow are special cases of Theorem \ref{unipotent} (when $M$ is of finite volume, $\{ h_{t}^{U_{\alpha}} \}$ is ergodic, and when $M$ is compact, $\{ h_{t}^{U_{\alpha}} \}$ is uniquely ergodic).  The following Corollary was already proved in \cite{forniulci}, \cite{tiedrahor}, \cite{tiedra}
under slightly weaker regularity assumptions; in this paper we have
not attempted to optimize the regularity.

\begin{corollary}
\noindent \textbf{a.} Any smooth time-change $\{h_{t}^{U_{\alpha}}\}$ of the horocycle flow on $M$ (finite volume) has absolutely continuous spectrum on $L^{2}_{0}(M,vol_{\alpha})$ if $\Vert \frac{X\alpha}{\alpha}\Vert_{\infty}<1$.

\vspace{0.2cm}
\noindent \textbf{b.}  Any smooth time-change $\{h_{t}^{U_{\alpha}}\}$ of the horocycle flow on $M$ (compact) has Lebesgue spectrum on $L^{2}_{0}(M,vol_{\alpha})$.
\end{corollary}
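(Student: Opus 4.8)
The plan is to recognize that part \textbf{a} and part \textbf{b} of the Corollary are, respectively, direct specializations of Theorem~\ref{unipotent}\,\textbf{a} and of the Maximal Spectral Type theorem combined with Theorem~\ref{unipotent}\,\textbf{b}, once one checks that the algebraic setup of $\mathfrak{sl}_2(\mathbb{R})$ laid out just before the Corollary matches the abstract hypotheses. So the first step is to verify the structural input: with $U=\left(\begin{smallmatrix}0&1\\0&0\end{smallmatrix}\right)$ nilpotent, $X=\left(\begin{smallmatrix}1/2&0\\0&-1/2\end{smallmatrix}\right)$, and $V=\left(\begin{smallmatrix}0&0\\1&0\end{smallmatrix}\right)$, one computes $[X,U]=U$, $[X,V]=-V$, and $[U,V]=2X$ — this is exactly the Jacobson–Morozov $\mathfrak{sl}_2$-triple relation required in the proof of Theorem~\ref{unipotent}, and in particular it furnishes the element $V$ with $[X,V]=-V$ needed for the flow-box construction in the Maximal Spectral Type argument. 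Second, one invokes the cited essential skew-adjointness of $U,V,X$ on $C^\infty(M)$ from \cite{amreinnelson}, so that $D=C^\infty(M)$ is a legitimate common invariant core and $e^{tU_\alpha}$ is a well-defined strongly continuous unitary group; the smooth time-change function $\alpha\in C^\infty(M)$, $\alpha>0$, normalized so $\int_M\alpha\,vol=1$, provides the operator $U_\alpha=U/\alpha$ exactly as in the general construction.

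Next I would handle the ergodicity hypotheses, which is the only place the two parts diverge. For part \textbf{a}, $M$ has finite volume: Hedlund's theorem (or the general theory of unipotent flows) gives that the horocycle flow $\{h_t^U\}$ is ergodic, hence so is the time-change $\{h_t^{U_\alpha}\}$ (time-changes preserve ergodicity), so Theorem~\ref{unipotent}\,\textbf{a} applies verbatim under the Kushnirenko-type condition $\|X\alpha/\alpha\|_\infty<1$ and yields absolutely continuous spectrum on $L^2_0(M,vol_\alpha)=\overline{Ran(U_\alpha)}$. For part \textbf{b}, $M$ is compact: by Furstenberg's theorem the horocycle flow is \emph{uniquely} ergodic, hence $\{h_t^{U_\alpha}\}$ is uniquely ergodic as well; Theorem~\ref{unipotent}\,\textbf{b} then gives absolutely continuous spectrum with no condition on $\alpha$, and the Maximal Spectral Type theorem upgrades this to Lebesgue spectrum on $\overline{Ran(U_\alpha)}$. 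Since in the compact case the flow is in particular ergodic, $\overline{Ran(U_\alpha)}=L^2_0(M,vol_\alpha)$, which is the stated conclusion.

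The proof is therefore essentially a bookkeeping exercise — there is no new analytic content beyond what Theorems~\ref{unipotent} and the Maximal Spectral Type theorem already provide. The one genuine point to be careful about is that the Maximal Spectral Type argument's flow-box construction requires both $V$ (with $[X,V]=-V$) \emph{and} $X$ to act as essentially skew-adjoint operators with $C^\infty(M)$ as a common core, so that the test functions $g$ built from the flow box $E^T_{\rho,\sigma}$ and their images $U_\alpha g$, $Xg$ are genuinely smooth with the claimed explicit expressions; this is precisely what \cite{amreinnelson} supplies for $PSL(2,\mathbb{R})$, and it is the reason the compact-case statement can cite that reference rather than re-proving regularity. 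I expect the main (minor) obstacle to be simply making explicit that unique ergodicity of $\{h_t^U\}$ on compact $M$ transfers to $\{h_t^{U_\alpha}\}$ — this follows because a time-change by a continuous positive function sets up a homeomorphism between invariant probability measures of the two flows, so uniqueness is preserved — after which both parts of the Corollary follow immediately.
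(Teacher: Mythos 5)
Your proposal is correct and matches the paper's approach exactly: the paper itself gives no separate proof but simply remarks that "time-changes of the horocycle flow are special cases of Theorem~\ref{unipotent}," citing Hedlund/Furstenberg for ergodicity (finite volume) and unique ergodicity (compact) of $\{h_t^{U_\alpha}\}$, and appealing to the Maximal Spectral Type theorem for the Lebesgue conclusion in part \textbf{b}. You have merely made explicit the $\mathfrak{sl}_2$-triple verification and the transfer of (unique) ergodicity under time-change, which the paper takes for granted.
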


\subsection{Twisted horocycle flows}

\noindent Much work has been done on the spectral analysis of skew products on tori, for example, \cite{ansai}, \cite{iwanik}, \cite{AI}, \cite{iwank}. We would like to consider a skew product for which the base dynamics are ergodic (in fact uniquely ergodic), but not an action on $S^{1}$. For such an example, we will examine the conditions under which the spectral properties persist or do not persist after we combine the horocycle time-change with a circle rotation. Our new space is $\hat{M}=(\Gamma \setminus PSL(2, \mathbb{R})) \times S^{1}$ for $\Gamma$ a cocompact lattice. We define the following operators:

\begin{center} $\hat{X} =(X,0)$ where $X$ is the generator of the geodesic flow. \end{center}
\begin{center} $\hat{V}=(V,0)$ where $V$ is the generator of the negative horocycle flow. \end{center}
\begin{center} $\hat{\frac{d}{d\theta}}=(0, \frac{d}{d\theta})$ where $\frac{d}{d\theta}$ is a rotation on $S^{1}$. \end{center}
\begin{center} $W=(U,0 )+ (0,\alpha \frac{d}{d\theta})$ where $U$ is the generator of the positive horocycle flow and $\alpha = \alpha(x)$, $x \in \Gamma \setminus PSL(2, \mathbb{R})$, is the time change function as in 3.1. \end{center}

\begin{proposition}
The flow $\{ \phi_{t}^{W}\}$ is uniquely ergodic.
\end{proposition}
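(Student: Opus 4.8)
The plan is to realize $\{\phi_t^{W}\}$ as a time-change of a product flow on $\widehat M=M\times S^{1}$, $M=\Gamma\backslash PSL(2,\mathbb{R})$, and to deduce its unique ergodicity from that of the product flow. Integrating $W=(U,\alpha\tfrac{d}{d\theta})$ gives $\phi_t^{W}(x,\theta)=(h_t^{U}x,\ \theta+c(x,t)\bmod 1)$ with $c(x,t)=\int_0^{t}\alpha(h_s^{U}x)\,ds$, an additive cocycle over the horocycle flow $\{h_t^{U}\}$. Since $\alpha>0$, the time-changed horocycle flow $\{h_u^{U_\alpha}\}$ runs along the same orbits as $\{h_t^{U}\}$, reparametrized by $h_t^{U}x=h_{c(x,t)}^{U_\alpha}x$ (uniqueness for the defining ODE). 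Hence $\phi_t^{W}(x,\theta)=\Psi_{c(x,t)}(x,\theta)$, where $\Psi_u(x,\theta):=(h_u^{U_\alpha}x,\ \theta+u)$ is the flow of $(U_\alpha,\tfrac{d}{d\theta})$, i.e.\ the product $\{h_u^{U_\alpha}\}\times\{R_u\}$ of the time-changed horocycle flow with the unit rotation flow $R_u$ on $S^{1}$. The cocycle identity for $c$, together with the fact that the $M$-coordinate of $\Psi_{c(x,t)}(x,\theta)$ is again $h_t^{U}x$, shows $c$ is a bona fide time-change cocycle for $\{\Psi_u\}$; its generator is $\alpha(x)$, bounded between two positive constants on the compact $\widehat M$. (As a check, $\alpha\cdot(U_\alpha,\tfrac{d}{d\theta})=(U,\alpha\tfrac{d}{d\theta})=W$.)

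A time-change by a function bounded away from $0$ and $\infty$ puts the invariant probability measures of $\{\Psi_u\}$ in bijection with those of $\{\phi_t^{W}\}$ (if $m$ is $\{\Psi_u\}$-invariant, the normalization of $\alpha^{-1}m$ is $\{\phi_t^{W}\}$-invariant, and conversely), so it suffices to show $\{\Psi_u\}$ is uniquely ergodic. Both factors are uniquely ergodic: $\{h_u^{U_\alpha}\}$ because it is a smooth time-change of the uniquely ergodic horocycle flow on the compact $M$, and $\{R_u\}$ trivially on $S^{1}$. Moreover $\{h_u^{U_\alpha}\}$ is weakly mixing — indeed it has purely continuous (Lebesgue) spectrum on $L^2_0(M,vol_\alpha)$ by the preceding Corollary (or classically by Marcus's mixing theorem, \cite{marcusfigure}) — while $\{R_u\}$ has pure point spectrum.

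I would then conclude by Furstenberg's disjointness theorem (a weakly mixing flow is disjoint from one with discrete spectrum): any $\{\Psi_u\}$-invariant probability measure has $M$- and $S^{1}$-marginals equal to $vol_\alpha$ and Lebesgue by unique ergodicity of the factors, so its ergodic components are ergodic joinings of the two factors, hence all equal the product $vol_\alpha\times\mathrm{Leb}$. Equivalently and more by hand: disintegrate an invariant measure over $vol_\alpha$ along $\pi_M$, and observe that invariance forces each nonzero Fourier mode of the fibre measures to satisfy $a_n\circ h_u^{U_\alpha}=e^{2\pi i n u}a_n$, i.e.\ to be (after normalization) an $L^{2}$-eigenfunction of $\{h_u^{U_\alpha}\}$ with nonzero eigenvalue, which weak mixing excludes; hence all fibre measures are Lebesgue. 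Either way, $\{\Psi_u\}$, and therefore $\{\phi_t^{W}\}$, is uniquely ergodic. The substantive ingredient is exactly the weak mixing of $\{h_u^{U_\alpha}\}$, supplied by the spectral results proved earlier; everything else is the time-change bookkeeping of the first step (which is where I expect the care to be needed, in getting the cocycle identities and the time-change structure exactly right) together with a standard Fourier argument over the uniquely ergodic base.
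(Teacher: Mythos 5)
Your proof is correct, and it takes a genuinely different route to the conclusion. Both you and the paper begin by noting that $\{\phi_t^W\}$ is a bounded time-change of the product flow $\{\Psi_u\}=\{h_u^{U_\alpha}\}\times\{R_u\}$ and that $\{h_u^{U_\alpha}\}$ is weakly mixing (Marcus). The paper then uses only the weaker fact that weak mixing $\times$ ergodic is ergodic, concludes that $\{\Psi_u\}$, hence $\{\phi_t^W\}$ (same orbit foliation, so same invariant sets), is \emph{ergodic}, and finally invokes Furstenberg's 1961 theorem on skew products: an ergodic compact-group extension of a uniquely ergodic base $\{h_t^U\}$ is uniquely ergodic. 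You, by contrast, prove \emph{unique ergodicity} of the product $\{\Psi_u\}$ directly, via Furstenberg's disjointness theorem (weak mixing is disjoint from discrete spectrum) together with unique ergodicity of the two factors — equivalently your hands-on Fourier argument on fibre measures — and then transfer unique ergodicity through the time-change using the bijection $m\leftrightarrow\alpha^{-1}m$ (normalized) between invariant probabilities. Your route avoids the skew-product theorem and instead leans on disjointness plus the standard fact that bounded time-changes preserve unique ergodicity; it is a bit more self-contained in the Fourier version, while the paper's route proves less about the auxiliary product flow and concludes via a single citation to \cite{fursten}. Both are sound; you should just make sure to state explicitly, as the paper does via \cite{brown} and \cite{fursten}, the two lemmas you are importing (disjointness of weak mixing and discrete spectrum, and invariance of unique ergodicity under bounded time-change), since neither is proved in the paper.
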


\begin{proof}
Consider the time-change $\{ \phi_{t}^{W_{\alpha}}\} = \frac{1}{\alpha}W = \hat{U}_{\alpha} \times \frac{\hat{d}} {d\theta}$. Since $\{ h_{t}^{U_{\alpha}}\}$ is mixing \cite{marcusfigure}, then it is weakly mixing, and thus $\{\phi_{t}^{W_{\alpha}} \}$ is ergodic \cite{brown}. This implies the ergodicity of $\{\phi_{t}^{W}\}$. Since $\{\phi_{t}^{W}\}$ is ergodic and $\{h_{t}^{U}\}$ is uniquely ergodic \cite{furst}, then from \cite{fursten} (applied to flows), $\{\phi_{t}^{W}\}$ is uniquely ergodic.
\end{proof}

\noindent We are interested in the spectrum of the flow $\{ \phi_{t}^{W} \}$, so we compute the commutator with $\hat{X}$.
$$
e^{-tW}\left [\hat{X}, e^{tW} \right] = \boxed{tW + \left (\int_{0}^{t}(\hat{X}\alpha-\alpha)\circ \phi_{\tau}^{W}(x)\> d\tau \right )\hat{\frac{d}{d\theta}} = H(t)}
$$
\noindent For $u \in C^{\infty}(\hat{M})$,
$$
\lim_{t \to \infty} \frac{H(t)}{t} u=\boxed{\left (W - \frac{\hat{d}}{d\theta}\right )\>u = Hu}
$$
\noindent Since,
$$
\left \| \; \langle e^{tW}f,f\rangle \; \right \|_{L^{2}(\mathbb{R}, dt)}\; = \> \left \| \frac{1}{\sigma}\int_{0}^{\sigma} \langle e^{s\hat{X}}e^{tW}f, e^{s\hat{X}}f\rangle \> ds \> \right \|_{L^{2}(\mathbb{R}, dt)},
$$
\noindent the preliminary assumptions are satisfied with $B_{1}=B_{2}=I$.

%
 However, if we proceed with verifying the conditions of Theorem \ref{main} for functions in the range of $H$, we are unable to extend pointwise bounds in $L^{2}(\hat{M})$ to uniform bounds in the operator norm. Instead we modify our operators by introducing an operator $P$, defined in such a way that it not only acts as a projection operator but also preserves regularity.

%
Let $\chi \in C_{0}^{\infty}(\mathbb{R}\setminus \{0\})$ such that the support of $\chi$ is a compact subset of the spectrum of $H$ away from $0$. For $f,g \in L^{2}(\hat{M})$,
\[
\begin{split}
\langle Pf,g \rangle_{L^{2}(\hat{M})} = & \int_{\mathbb{R}} \chi (x)  d\mu_{f,g}(x)\\
 = & \int_{\mathbb{R}} \hat{\chi}(t) \hat{\mu_{f,g}} (t) \> dt\\
 = & \int_{\mathbb{R}} \hat{\chi}(t) \langle e^{tH}f,g \rangle_{L^{2}(\hat{M})} \> dt
\end{split}
\]
\noindent since $H$ is a vector field, and thus,
$$
Pf = \int_{\mathbb{R}} \hat{\chi}(t)e^{tH}f \> dt.
$$
\noindent The decay of $e^{tH}f=f \circ \phi_{t}^{H}$ is at most polynomial in $t$, however, since $\chi \in C_{0}^{\infty}(\mathbb{R}\setminus \{0\})$, $\hat{\chi} \in \mathcal{S}(\mathbb{R})$, and thus, must decay faster than any power of $\frac{1}{t}.$ In this way, we guarantee that
$$
P: C^{\infty}(\hat{M}) \to C^{\infty}(\hat{M}),
$$
\noindent and we take
$$
D = P(C^{\infty}(\hat{M})).
$$
\noindent Now we introduce our modified operators.

%
Let
$$
\hat{X}_{p}= P\hat{X}P.
$$
\noindent Since $P$ commutes with $e^{tH}$, $P$ commutes with $H$. Thus, $P$ commutes with $W$ and $e^{tW}$. Therefore,
$$
e^{-tW}\left [\hat{X}_{P}, e^{tW} \right]=Pe^{-tW}\left [\hat{X},e^{tW} \right]P= PH(t)P=H_{P}(t).
$$
\noindent For $u \in C^{\infty}(\hat{M})$,
$$
\lim_{t \to \infty} \frac{H_{P}(t)}{t} u=PHPu=HP^{2}u= H_{P}u.
$$
\noindent Note that now $H_{P}$ is a bounded, invertible operator. Let
$$
C_{H_{P}}=\left \| H_{P} \right \|_{op}
$$
$$
C_{H_{P}}^{-1}=\| H_{P}^{-1} \|_{op}
$$
$$
C_{P}^{k} = \| P^{k} \|_{op}
$$
$$
C_{H}^{P} = \| HP \|_{op}
$$
$$
C_{\alpha}'=\| \hat{X}\alpha - \alpha \|_{\infty}
$$

\begin{theorem} \label{twisted}
The flow $\{\phi_{t}^{W}\}$ has absolutely continuous spectrum on $\overline{Ran(H)}$.
\end{theorem}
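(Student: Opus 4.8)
\emph{Strategy.} The plan is to apply Theorem~\ref{main} not to $(\hat X,W,H(t),H)$ directly --- where, as noted in the excerpt, pointwise $L^{2}$ bounds do not upgrade to operator-norm bounds --- but to the $P$-modified data $\hat X_{P}=P\hat XP$, $W$, $H_{P}(t)=e^{-tW}[\hat X_{P},e^{tW}]=PH(t)P$ (with limit operator $H_{P}=PHP=HP^{2}$), on the Hilbert space $\mathcal H_{\chi}:=\overline{Ran(H_{P})}=\overline{Ran(P)}$, and then to let $\chi$ vary so as to exhaust $\overline{Ran(H)}$. First I would check the preliminary hypotheses for this modified data. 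Since $P$ sends $C^{\infty}(\hat M)$ into itself, $D=P(C^{\infty}(\hat M))$ is dense in $\mathcal H_{\chi}$ and invariant under $\hat X_{P}$; since $P$ commutes with $e^{tW}$ (hence with $W$ and with $A:=-iH$), the subspace $\mathcal H_{\chi}$, $W$ and $e^{tW}$ are all compatible, the identity $H_{P}(t)=PH(t)P$ holds on $D$, and $\tfrac{H_{P}(t)}{t}u\to PHPu=H_{P}u$ for $u\in D$ (here $\beta=1>\tfrac12$). One must also check that $\hat X_{P}$ is essentially skew-adjoint on $D$, so that $\{e^{s\hat X_{P}}\}$ is a strongly continuous unitary group; granting this, $\langle e^{tW}f,f\rangle=\langle e^{s\hat X_{P}}e^{tW}f,\,e^{s\hat X_{P}}f\rangle$ for every $s$, so the preliminary integral inequality holds as an equality with $B_{1}=B_{2}=I$. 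The structural gain of the construction is that, by the choice of $\chi$, $H_{P}$ is a bounded operator which is invertible on $\mathcal H_{\chi}$, so $H_{P}^{-1}$ is available there as a bounded operator.

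\emph{Verifying (i)--(iii).} Next I would check conditions (i)--(iii) for $(\hat X_{P},W,H_{P}(t),H_{P})$ on $\mathcal H_{\chi}$. For (i): writing $\tfrac{H(t)}{t}=H+\varepsilon_{t}\,\widehat{\tfrac{d}{d\theta}}$ with $\varepsilon_{t}=1+\tfrac1t\int_{0}^{t}(\hat X\alpha-\alpha)\circ\phi^{W}_{\tau}\,d\tau$, the unique ergodicity of $\{\phi^{W}_{t}\}$ (the Proposition above) gives $\|\varepsilon_{t}\|_{\infty}\to0$; conjugating by $P$ and multiplying by $H_{P}^{-1}$ gives $\tfrac{H_{P}(t)}{t}H_{P}^{-1}=I+(\text{error})$ with the error uniformly bounded in $t$ and of norm tending to $0$, so $\limsup_{t}\bigl\|I-\tfrac{H_{P}(t)}{t}H_{P}^{-1}\bigr\|<1$. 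For (iii): since $W$ and $\widehat{\tfrac{d}{d\theta}}$ commute (because $\alpha$ does not depend on $\theta$), one gets $[H(t),H]=[\,c_{t}\widehat{\tfrac{d}{d\theta}},\,W\,]=-(Uc_{t})\,\widehat{\tfrac{d}{d\theta}}$, where $c_{t}=\int_{0}^{t}(\hat X\alpha-\alpha)\circ\phi^{W}_{\tau}\,d\tau$ and $Uc_{t}=(\hat X\alpha-\alpha)\circ\phi^{W}_{t}-(\hat X\alpha-\alpha)$ is uniformly bounded in $t$; sandwiching by $P$'s and $H_{P}^{-1}$ then yields a bounded operator with norm uniform in $t$. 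Condition (ii) is the analogue of step (ii) in the proof of Theorem~\ref{unipotent}: differentiating $\tfrac{H_{P}(t)}{t}H_{P}^{-1}$ along $\hat X_{P}$ and using $D\phi^{W}_{t}(\hat X)=c_{t}\,\widehat{\tfrac{d}{d\theta}}\!\circ\phi^{W}_{t}+\hat X\circ\phi^{W}_{t}$, one obtains a sum of terms whose coefficients grow at worst linearly in $t$ (killed by $\tfrac1t$) plus one term involving $\hat X(\hat X\alpha)$, all bounded after conjugating by $P$. In each case the insertion of $P$ is what legitimizes the passage from pointwise $\|\cdot\|_{\infty}$ bounds on the coefficient functions to uniform operator-norm bounds.

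\emph{Conclusion.} By Theorem~\ref{main}, the spectral measures $\mu_{f}$ of $W$ are then absolutely continuous for every $f\in\mathcal H_{\chi}=\overline{Ran(P)}$. To extend this to all of $\overline{Ran(H)}$, observe that $\overline{Ran(H)}=(\ker H)^{\perp}$, so the spectral measure of $A=-iH$ attached to such an $f$ gives no mass to $\{0\}$; taking $\chi_{k}\in C_{0}^{\infty}(\mathbb R\setminus\{0\})$ supported in the spectrum of $H$ with $0\le\chi_{k}\le1$ and $\chi_{k}\uparrow1$ pointwise off $0$, the operators $P_{k}=\chi_{k}(A)$ satisfy $P_{k}f\to f$. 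Since $\mu_{P_{k}f}$ is absolutely continuous for each $k$ and the absolutely continuous subspace of $W$ is closed, $\mu_{f}$ is absolutely continuous; hence $\{\phi^{W}_{t}\}$ has absolutely continuous spectrum on $\overline{Ran(H)}$.

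\emph{Main obstacle.} The hard part will be the verification of conditions (i)--(iii) for the modified operators. The cutoff $P=\chi(-iH)$ regularizes only in the directions seen by $H=W-\widehat{\tfrac{d}{d\theta}}$, not in the transverse geodesic direction $\hat X$, so one must track carefully which factors in each commutator are absorbed by the boundedness of $H_{P}^{-1}$ and which must be controlled via the $\|\cdot\|_{\infty}$-bounds coming from unique ergodicity of $\{\phi^{W}_{t}\}$, and one must keep $P$ sandwiched around every occurrence of $\hat X$. A secondary technical point is establishing that $\hat X_{P}=P\hat XP$ is essentially skew-adjoint on $D=P(C^{\infty}(\hat M))$ and that $H_{P}$ is genuinely boundedly invertible on $\overline{Ran(H_{P})}$.
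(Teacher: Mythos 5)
Your strategy is the same as the paper's: regularize with the cutoff $P=\chi(-iH)$ built from $\chi\in C_{0}^{\infty}(\mathbb R\setminus\{0\})$, replace $\hat X$ by $\hat X_{P}=P\hat XP$ so that $H_{P}(t)=PH(t)P$, $H_{P}=HP^{2}$, verify the conditions of Theorem~\ref{main} for the modified data, and then let $\chi\uparrow 1$ off $0$ to cover $\overline{Ran(H)}$. Your extension argument (using $P_{k}f\to f$ for $f\in(\ker H)^{\perp}$ and the closedness of the absolutely continuous subspace) is, if anything, a cleaner way to state what the paper does, and you correctly flag the essential-skew-adjointness of $\hat X_{P}$ on $D=P(C^{\infty}(\hat M))$ as a point that needs checking.

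However, there is a genuine gap in the verification of conditions (i)--(iii): every error term you need to bound contains a free copy of the unbounded operator $\widehat{\tfrac{d}{d\theta}}$, and the cutoff $P$ does not control it. In condition (i), for example, you write $\tfrac{H_{P}(t)}{t}H_{P}^{-1}=I+P\varepsilon_{t}\widehat{\tfrac{d}{d\theta}}PH_{P}^{-1}$ and assert the error is uniformly bounded with norm tending to $0$ because $\|\varepsilon_{t}\|_{\infty}\to 0$. But $P=\chi(-iH)$ cuts off the spectrum of $H=W-\widehat{\tfrac{d}{d\theta}}$, not of $\widehat{\tfrac{d}{d\theta}}$ itself: since the horocycle part $W$ has full continuous spectrum, restricting $H$ to a compact set away from $0$ does not restrict the $\theta$-frequency $n$. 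Thus $\widehat{\tfrac{d}{d\theta}}PH_{P}^{-1}$ is unbounded, and the claim fails as stated; the same issue contaminates (ii) and (iii). The ingredient the paper uses, and which your proposal omits, is the decomposition $L^{2}(\hat M)=\bigoplus_{n}\mathbb E_{n}$ into eigenspaces $\mathbb E_{n}=\{\widehat{\tfrac{d}{d\theta}}u=inu\}$ of the circle factor. Since $\alpha$ is independent of $\theta$, $W$ and hence $H$, $P$, $H_{P}(t)$ all commute with $\widehat{\tfrac{d}{d\theta}}$, so each $\mathbb E_{n}$ is invariant, and on $\mathbb E_{n}$ the offending $\widehat{\tfrac{d}{d\theta}}$ becomes multiplication by the scalar $in$. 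Theorem~\ref{main} is then applied on each $\mathbb E_{n}\cap\overline{Ran(P)}$ separately; the resulting operator-norm bounds (and hence the admissible $\sigma$) depend on $n$, which is harmless because the conclusion --- absolute continuity of $\mu_{f}$ for $f$ in that sector --- is obtained sector by sector and then summed. Without this decomposition, the bounds you need in (i)--(iii) are simply not operator-norm bounds on $\overline{Ran(P)}$, so the argument does not go through.
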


\begin{proof}
\noindent We will verify the conditions of Theorem \ref{main} on each subspace
$$
\mathbb{E}_{n} = \left \{\hat{\frac{d}{d \theta}}u=inu: u \in L^{2}(\hat{M}) \right \}.
$$
\noindent $(i)$
\[
\begin{split}
\frac{H_{P}(t)}{t} =& PWP + Pin\left (\frac{1}{t}\int_{0}^{t}(\hat{X}\alpha - \alpha)\circ \phi_{\tau}^{W}(x) \> d\tau \right)P\\
=& PHP + PinP + Pin \left (\frac{1}{t}\int_{0}^{t}(\hat{X}\alpha - \alpha)\circ \phi_{\tau}^{W}(x) \> d\tau \right )P\\
=& H_{P} + Pin\left( \frac{L(t)}{t}+1 \right )P
\end{split}
\]
\noindent for
$$
L(t) = \int_{0}^{t}(\hat{X}\alpha - \alpha)\circ \phi_{\tau}^{W}(x) \> d\tau.
$$
\noindent Let $f=H_{P}g$,
\[
\begin{split}
\left \Vert \frac{H_{P}(t)}{t}H_{P}^{-1}f \right \Vert_{L^{2}(\hat{M})} = & \left \Vert \frac{H_{P}(t)}{t}g \right \Vert_{L^{2}(\hat{M})}\\
= & \left \Vert H_{P}g + inP\left (\frac{L(t)}{t}+1 \right )P g \right \Vert_{L^{2}(\hat{M})}\\
\leq &  \left \Vert H_{P}g  \right \Vert_{L^{2}(\hat{M})} + \left \Vert inP\left (\frac{L(t)}{t}+1 \right )P g \right \Vert_{L^{2}(\hat{M})} \\
\leq & C_{H_{P}}\big \Vert g \big \Vert_{L^{2}(\hat{M})} + \>  nC_{P}^{2}\left \Vert \hat{X}\alpha -\alpha \right \Vert_{\infty} \left \Vert g \right \Vert_{L^{2}(\hat{M})}\\
+ &  \> n C_{P}^{2} \left \Vert g \right \Vert_{L^{2}(\hat{M})}\\
= & \left (C_{H_{P}} + nC_{P}^{2}(C_{\alpha}'+1) \right)  \left \Vert g \right \Vert_{L^{2}(\hat{M})}.
\end{split}
\]
\noindent So,
$$
\left \Vert \frac{H_{P}(t)}{t}H_{P}^{-1} \right \Vert_{op} \leq C_{H_{P}} + nC_{P}^{2}(C_{\alpha}'+1).
$$
\noindent Also,
\[
\begin{split}
\left \| \left (I - \frac{H_{p}(t)}{t}H_{p}^{-1}\right )f \right \|_{L^{2}(\hat{M})} = & \left \| \left (I - I - inP\left (\frac{L(t)}{t}+1 \right )PH_{p}^{-1} \right )f \right \|_{L^{2}(\hat{M})}\\
= & \left \| inP\left ( \frac{L(t)}{t}+ 1 \right )Pg \right \|_{L^{2}(\hat{M})}\\
\leq & \>  n \> C_{P}^{2} \left \| \frac{L(t)}{t} + 1 \right \|_{\infty}\cdot\| g \|_{L^{2}(\hat{M})}.
\end{split}
\]
\noindent Since $\{ \phi_{t}^{W}\}$ is uniquely ergodic, the following converges uniformly,
$$
\lim_{t \to \infty} \frac{L(t)}{t} + 1 = \lim_{t \to \infty}\frac{1}{t}\int_{0}^{t}(\hat{X}\alpha - \alpha) \circ \phi_{\tau}^{W}(x) \>d\tau + 1 =  0.
$$
\noindent So,
$$
\limsup_{t \to \infty} \left \Vert I - \frac{H_{p}(t)}{t}H_{p}^{-1} \right \Vert_{op} \leq \limsup_{t \to \infty} n(C_{P})^{2} \left \Vert \left ( \frac{L(t)}{t} +1 \right ) \right \Vert_{\infty}=0,
$$
\noindent and hence,
$$
\limsup_{t \to \infty} \left \| I - \frac{H_{p}(t)}{t}H_{p}^{-1} \right \|_{op}<1.
$$
\noindent $(ii)$
\[
\begin{split}
\left [\hat{X}_{P}, \frac{H_{p}(t)}{t}(H_{P})^{-1} \right ]
= & \underbrace{P \left [\hat{X},P \right ]P\frac{H(t)}{t}PH_{P}^{-1}}_{a}\\
+ & \underbrace{P^{2} \left [\hat{X},P \right]\frac{H(t)}{t}PH_{P}^{-1}}_{b}\\
+ & \underbrace{P^{3}\left [\hat{X},\frac{H(t)}{t}\right ]PH_{P}^{-1}}_{c}\\
+ & \underbrace{P^{3}\frac{H(t)}{t}\left [\hat{X},P \right]H_{P}^{-1}}_{d}\\
+ & \underbrace{P^{3}\frac{H(t)}{t}P\left [\hat{X},H_{P}^{-1} \right]}_{e}.
\end{split}
\]
\noindent Before we bound terms a-e, we show bounds for the terms $\left [\hat{X},P \right ]$ and $\left [\hat{X}, \frac{H(t)}{t} \right]P$.
\[
\begin{split}
\left [\hat{X},P \right]f = & \int_{\mathbb{R}} \hat{\chi}(t) \left[\hat{X},e^{tH} \right ]f \> dt\\
= & \int_{\mathbb{R}} \hat{\chi}(t)e^{tW}e^{-tW}\left [\hat{X},e^{tH+tin-tin} \right]f \> dt\\
= & \int_{\mathbb{R}} \hat{\chi}(t)e^{tW}e^{-tW}\left [\hat{X},e^{tW} \right]e^{-itn}f \> dt\\
= & \int_{\mathbb{R}} \hat{\chi}(t)e^{tW}H(t)e^{-itn}f \> dt \\
= & \int_{\mathbb{R}} \hat{\chi}(t)e^{tW}tWe^{-itn}f \> dt +  \int_{\mathbb{R}} \hat{\chi}(t)e^{tW}(in L(t))e^{-itn}f \> dt\\
= & \int_{\mathbb{R}} \hat{\chi}(t)e^{-itn}t\frac{d}{dt}(e^{tW}f) \> dt +  \int_{\mathbb{R}} \hat{\chi}(t)e^{tW}(in L(t))e^{-itn}f \> dt.
\end{split}
\]
\noindent The first term we integrate by parts:
$$
\hspace{1cm} \hat{\chi}(t)e^{-itn}t \hspace{1cm} \hat{\chi}^{'}(t)e^{-itn}t + \hat{\chi}(t)e^{-itn}-in\hat{\chi}(t)e^{-itn}t
$$
$$
\hspace{-2.5cm} e^{tW}f \hspace{1cm} \frac{d}{dt}(e^{tW}f)
$$
%
\[
\begin{split}
\int_{\mathbb{R}} \hat{\chi}(t)e^{-itn}t\frac{d}{dt}(e^{tW}f) \> dt
 = & \>  \hat{\chi}(t)e^{-itn}te^{tW}f\biggr\rvert_{\infty}^{-\infty}\\
 + &  \int_{\mathbb{R}} \left ( \hat{\chi}^{'}(t)e^{-itn}t + \hat{\chi}(t)e^{-itn}-in\hat{\chi}(t)e^{-itn}t \right)e^{tW}f  \> dt\\
= & \int_{\mathbb{R}} \left ( \hat{\chi}^{'}(t)e^{-itn}t + \hat{\chi}(t)e^{-itn}-in\hat{\chi}(t)e^{-itn}t \right)e^{tW}f  \> dt.
\end{split}
\]
\noindent So,
\[
\begin{split}
& \left \| \int_{\mathbb{R}} \left ( \hat{\chi}^{'}(t)e^{-itn}t + \hat{\chi}(t)e^{-itn}-in\hat{\chi}(t)e^{-itn}t \right)e^{tW}f  \> dt \right \|_{L^{2}(\hat{M})}\\
\leq & \left (\int_{\mathbb{R}} \left | \hat{\chi}^{'}(t)t \right | \> dt + \int_{\mathbb{R}} \left |\hat{\chi}(t) \right| \> dt + \int_{\mathbb{R}} \left |n\hat{\chi}(t) \right |  \> dt \right)  \left \| f \right \|_{L^{2}(\hat{M})}\\
\leq & C_{1} \left \| f \right \|_{L^{2}(\hat{M})} .
\end{split}
\]
\noindent The boundedness of the second term follows immediately,
\[
\begin{split}
\left \| \int_{\mathbb{R}} \hat{\chi}(t)e^{tW}(in L(t))e^{-itn}f  \> dt \right \|_{L^{2}(\hat{M})}
\leq & \int_{\mathbb{R}} \left| \hat{\chi}(t)(in L(t)) \right| \> dt \| f \|_{L^{2}(\hat{M})}\\
\leq & C_{2}  \| f \|_{L^{2}(\hat{M})}.
\end{split}
\]
\noindent Thus,
$$
\left \| \left [\hat{X},P \right ] f \right \|_{L^{2}(\hat{M})} \leq (C_{1}+C_{2}) \| f \|_{L^{2}(\hat{M})} = C \| f \|_{L^{2}(\hat{M})},
$$
\noindent and hence,
$$
\left \| \left [\hat{X},P \right ] \right \|_{op}\leq C.
$$
\noindent Also,
\[
\begin{split}
\left [\hat{X},\frac{H(t)}{t} \right ]P = & \left [\hat{X},W \right]P + \left [\hat{X},\frac{1}{t}\int_{0}^{t}(\hat{X}\alpha - \alpha) \circ \phi_{\tau}^{W}(x) \> d\tau \> in \right]P\\
= & \> (\hat{U} + \hat{X}\alpha \> in)P + [\hat{X},\frac{1}{t}\int_{0}^{t}(\hat{X}\alpha - \alpha) \circ \phi_{\tau}^{W}(x) \> d\tau \> in]P\\
= & \left (\hat{U} + (\alpha-1)in -(\alpha -1)in + \hat{X}\alpha \> in \right )P\\
+ & \left [\hat{X},\frac{1}{t}\int_{0}^{t}(\hat{X}\alpha - \alpha) \circ \phi_{\tau}^{W}(x) \> d\tau \> in \right]P\\
= & \> HP + \left (\hat{X}\alpha - \alpha +1 \right)\> inP + \left [\hat{X},\frac{1}{t}\int_{0}^{t}(\hat{X}\alpha - \alpha) \circ \phi_{\tau}^{W}(x) \> d\tau \> in \right]P.
\end{split}
\]
\noindent Now we bound the following term,
\[
\begin{split}
& \left [\hat{X},\frac{1}{t}\int_{0}^{t}(\hat{X}\alpha - \alpha) \circ \phi_{\tau}^{W}(x) \> d\tau \> in \right] \\
= &  \> \hat{X} \left (\frac{1}{t}\int_{0}^{t}(\hat{X}\alpha - \alpha)\circ \phi_{\tau}^{W}(x) \>d\tau \right )in\\
- &  \left (\frac{1}{t}\int_{0}^{t}(\hat{X}\alpha - \alpha)\phi_{\tau}^{W}(x) \>d\tau \right)\hat{X}in\\
= & \> \frac{1}{t}\int_{0}^{t}\left (D\phi_{\tau}^{W}(\hat{X})\circ \phi_{-\tau}^{W}(x) \right )(\hat{X}\alpha - \alpha)\circ \phi_{\tau}^{W}(x) \>d\tau \>  in.
\end{split}
\]
\noindent Since $\hat{X}\alpha - \alpha$ is a function on $M$,
\[
\begin{split}
\frac{1}{t}\int_{0}^{t}\left (D\phi_{\tau}^{W}(\hat{X})\circ \phi_{-\tau}^{W}(x) \right )&(\hat{X}\alpha - \alpha)\circ \phi_{\tau}^{W}(x) \>d\tau \>  in\\
= & \> \frac{1}{t}\int_{0}^{t} \left(D\phi_{\tau}^{U}(X)\circ \phi_{-\tau}^{U}(x) \right) (X\alpha - \alpha)\circ \phi_{\tau}^{U}(x) \>d\tau \>  in
\end{split}
\]\[
\begin{split}= & \> \frac{1}{t}\int_{0}^{t}(X+ \tau U)(X\alpha -\alpha)\circ \phi_{\tau}^{U}(x) \>d\tau \>  in.
\end{split}
\]
\noindent We consider the $L^{\infty}(M)$ norm and let
$$
\left \|
 X\alpha \right \|_{\infty} = C_{\alpha,M} < \infty
$$
$$
\left \|
 X\alpha - \alpha \right \|_{\infty} = C_{\alpha,M}^{'} < \infty
 $$
\noindent and
$$
\left \| X(X \alpha ) \right \|_{\infty}= C_{\alpha,M}'' < \infty.
$$
\noindent Hence,
\[
\begin{split}
& \left \| \frac{1}{t}\int_{0}^{t}(X+ \tau U)(X\alpha - \alpha)\circ \phi_{\tau}^{U}(x) \>d\tau \>  in \right \|_{\infty}\\
\leq & \left \| \frac{1}{t}\int_{0}^{t}X(X\alpha - \alpha)\circ \phi_{\tau}^{U}(x) \>d\tau \>  in \|_{\infty} + \| \frac{1}{t}\int_{0}^{t}\tau U(X\alpha - \alpha)\circ \phi_{\tau}^{U}(x) \>d\tau \>  in \right  \|_{\infty}\\
\leq & \> n \| X(X\alpha - \alpha) \|_{\infty}  \> + \> n \| (X\alpha - \alpha) \circ \phi_{t}^{U}-(X\alpha - \alpha)  \|_{\infty} \\
\leq & \> n \left \| X(X\alpha - \alpha) \|_{\infty}   + 2n \| X\alpha - \alpha  \right \|_{\infty} \\
\leq &  \> n \| X(X\alpha) \|_{\infty}   + \> n \| X \alpha \|_{\infty}  + 2n \| X\alpha - \alpha  \|_{\infty} \\
\leq & \> nC_{\alpha,M}'' \>  + n C_{\alpha,M} +  2nC_{\alpha,M}^{'} .
\end{split}
\]
\noindent Thus, $\left [\hat{X}, \frac{H(t)}{t} \right ]P$ extends to a bounded operator on $\overline{Ran(H_{p})}$ with operator norm uniformly bounded in $t$:
$$
\left \| \left [\hat{X}, \frac{H(t)}{t} \right ]P \right \|_{op} \leq \left (C_{H}^{P}+n(C_{\alpha}^{'}+1)C_{P}^{1} \right ) + n\left (C_{\alpha,M}'' \>  + C_{\alpha,M} +  2C_{\alpha,M}^{'} \right)C_{P}^{1}.
$$
\noindent $a$:
\[
\begin{split}
\left \| P \left [\hat{X},P \right ]P\frac{H(t)}{t}PH_{P}^{-1} \right \|_{op}\leq & \> C^{1}_{P}\cdot C \cdot \left \| \frac{H_{P}(t)}{t}H_{P}^{-1} \right \|_{op}\\
\leq & \> C^{1}_{P}\cdot C \cdot \left (C_{H_{P}} + nC_{P}^{2}(C_{\alpha}'+1) \right).
\end{split}
\]
\noindent $b:$
\[
\begin{split}
\left \| P^{2} \left [\hat{X},P \right ]\frac{H(t)}{t}PH_{P}^{-1} \right \|_{op}= & \left \| P^{2} \left [\hat{X},P \right ](H + \left (\frac{L(t)}{t} + 1 \right ) in)PH_{P}^{-1} \right \|_{op}\\
\leq & \> C_{P}^{2} \cdot C \cdot \left ( \left \| HP \right \|_{op} +\> n\left ( \left \| \hat{X}\alpha - \alpha \right \|_{\infty} + 1 \right ) C_{P}^{1} \right )\\   \cdot & \| H_{P}^{-1} \|_{op}\\
= & \> C_{P}^{2} \cdot C \cdot \left (C_{H}^{P} + n(C_{\alpha}'+1)C_{P}^{1} \right )C_{H_{P}}^{-1}
\end{split}
\]
\noindent $c:$
\[
\left \| P^{3} \left [\hat{X},\frac{H(t)}{t} \right ]PH_{P}^{-1} \right \|_{op}
\]
\[
\leq C_{P}^{3}\left ((C_{H}^{P}+n \left (C_{\alpha}^{'}+1)C_{P}^{1} \right ) + n(C_{\alpha,M}'' \>  + C_{\alpha,M} +  2C_{\alpha,M}^{'})C_{P}^{1} \right ) C_{H_{P}}^{-1}.
\]
\noindent $d:$
\[
\left \| P^{3}\frac{H(t)}{t}\left [\hat{X},P \right ]H_{P}^{-1} \right \|_{op} \leq C_{P}^{2}\cdot \left (C_{H}^{P} + nC_{P}^{1}(C_{\alpha}'+1) \right )\cdot C \cdot C_{H_{P}}^{-1}
\]
\noindent $e:$
\[
\begin{split}
 \left [\hat{X},H_{P}^{-1} \right ]= & H_{P}^{-1}\left [H_{P},\hat{X} \right ]H_{P}^{-1}\\
= & H_{P}^{-1}\left [P,\hat{X} \right ]PHH_{P}^{-1} + H_{P}^{-1}P\left [P,\hat{X} \right ]HH_{P}^{-1} + H_{P}^{-1}P^{2}\left [\hat{X},H \right]H_{P}^{-1}.
\end{split}
\]
\[
\begin{split}
\left \| \left [\hat{X},H_{P}^{-1} \right ] \right \|_{op}
 \leq & \> C_{H_{P}}^{-1}\cdot C \cdot C_{H}^{P}C_{H_{P}}^{-1} + C_{H_{P}}^{-1}C_{P}^{1}\cdot C \cdot C_{H}^{P}C_{P}^{-1}C_{H_{P}}^{-1}\\
 + & \> C_{H_{P}}^{-1}(C_{H_{P}}+C_{P}^{2}n(C_{\alpha}^{'}+1))C_{H_{P}}^{-1}.
\end{split}
\]
\[
\begin{split}
& \left \| P^{3}\frac{H(t)}{t}P\left [\hat{X},H_{P}^{-1}\right ] \right \|_{op}
\leq  C_{P}^{2}\left (C_{H_{P}}+nC_{P}^{2}(C_{\alpha}'+1) \right )\\
\cdot & \left ( C_{H_{P}}^{-1}\cdot C \cdot C_{H}^{P}C_{H_{P}}^{-1} + C_{H_{P}}^{-1}C_{P}^{1}\cdot C \cdot C_{H}^{P}C_{P}^{-1}C_{H_{P}}^{-1} + C_{H_{P}}^{-1}(C_{H_{P}}+C_{P}^{2}n(C_{\alpha}^{'}+1)C_{H_{P}}^{-1} \right ).
\end{split}
\]


\noindent $(iii)$.
\[
\begin{split}
\left [ H_{P}(t),H_{P} \right  ] H_{P}^{-1} = & \left [tH_{P} + P(L(t)+t)Pin, H_{P} \right ]H_{P}^{-1}\\
= & \left [PL(t)Pin,H_{P} \right ]H_{P}^{-1}\\
= & P^{3}HL(t)PinH_{P}^{-1}=P^{3}WL(t)PinH_{P}^{-1}-P^{3}inL(t)PinH_{P}^{-1}\\
= & P^{3}\left ((\hat{X}\alpha-\alpha)\circ \phi_{t}^{W} - (\hat{X}\alpha - \alpha)\right )PinH_{P}^{-1}-P^{3}inL(t)PinH_{P}^{-1}.
\end{split}
\]
\[
\begin{split}
\left \| \left [H_{p}(t),H_{P} \right ]H_{P}^{-1} \right \|_{op}\\
\leq & \> 2n \> C_{P}^{4}C_{\alpha}'C_{H_{P}}^{-1}+n^{2}C_{P}^{4}C_{\alpha}^{'}C_{H_{P}}^{-1}.
\end{split}
\]
\noindent We have shown that the conditions of Theorem \ref{main} are satisfied on $Ran(P)$. We would like to extend this to $Ran(H)$. Recall that $H_{P}$ depends upon a choice of $\chi \in C_{0}^{\infty}(\mathbb{R}\setminus \{0\})$. For $f \in Dom(H)$, we can express the following in terms of integrals involving the spectral projector as
$$
Hf = \int_{\mathbb{R}} x \> dE(x)f,
$$
$$
H_{P}f = \int_{\mathbb{R}}x\chi(x) \> dE(x)f,
$$
\noindent and since $f \in Dom(H)$,
$$
\int_{\mathbb{R}} x^{2} \> dE(x)f< + \infty.
$$
\noindent Let $\chi$ be such that \begin{center}
$\chi(x) = 1$ \hspace{0.5cm} for $x \in (-K,-\epsilon)\cup (\epsilon, K)=I_{\epsilon,K}$
\end{center}
\noindent and $supp(\chi)$ vanishes outside of $I_{\epsilon,K}$. Since on $I_{\epsilon,K}$,
$$
H_{P}f=Hf,
$$
\noindent we consider
$$
H_{P}f - Hf
$$
\noindent on $\mathbb{R}\setminus I_{\epsilon,K}$, i.e.,
$$
\int_{\mathbb{R}\setminus I_{\epsilon,K}}x(\chi(x)-1)\>dE(x)f.
$$
\noindent For $|x|\leq \epsilon$,
\[
\begin{split}
\lim_{\epsilon \to 0} \left \| \int_{|x|\leq \epsilon} x\left (\chi(x)-1 \right )\>dE(x)f \>  \right \|^{2}_{L^{2}(\mathbb{R})} = & \lim_{\epsilon \to 0}  \int_{|x|\leq \epsilon} \left |x(\chi(x)-1) \right |^{2}\>dE(x)f\\
\leq & \lim_{\epsilon \to 0}  4\epsilon^{2} \int_{|x|\leq \epsilon}\>dE(x)f\\
\leq & \lim_{\epsilon \to 0}   4\epsilon^{2} \| f \|_{L^{2}(\hat{M})}\\
= & 0.
\end{split}
\]
\noindent For $|x| \geq K$,
\[
\begin{split}
\lim_{K \to \infty} \left \| \int_{|x| \geq K} x(\chi(x)-1)\>dE(x)f \right \|^{2}_{L^{2}(\mathbb{R})} =  & \lim_{K \to \infty}  \int_{|x| \geq K} \left | x(\chi(x)-1) \right |^{2}\>dE(x)f\\
\leq & \lim_{K \to \infty}  \int_{|x| \geq K} 4x^{2}\>dE(x)f =0
\end{split}
\]
\noindent since
$$
\int_{\mathbb{R}} x^{2} \>dE(x)f< + \infty.
$$
\noindent Thus,
$$
\inf_{\chi \in C^{\infty}_{0}(\mathbb{R} \setminus \{0\})} \left \| H_{P}f-Hf \right \|_{L^{2}(\hat{M})} =0.
$$
\noindent So, for any $Hf$, there exists a sequence $\{H_{P_{n}}\}$ such that
$$
H_{P_{n}} \to Hf,
$$
\noindent and thus,
$$
\overline{Ran(H)} = \overline{\left \{ \bigcup_{P}Ran(H_{P}) \right \}}.
$$
\noindent Consequently, for every $f \in \overline{Ran(H)}$, $\mu_{f}$ is absolutely continuous.
\end{proof}
\noindent The characteristics of $\overline{Ran(H)}$ are linked to the properties of the cocycle
$$
a(x,t)=\int_{0}^{t}(\alpha -1) \circ h^{U}_{s}(x) \> ds
$$
\noindent since
$$
\phi_{t}^{H}(x,\theta)=\left (h^{U}_{t}(x), \theta + \int_{0}^{t}(\alpha -1) \circ h^{U}_{s}(x) \> ds \right).
$$
\noindent For $\pi$ a projection from $\mathbb{R}$ to $S^{1}$, let
$$
\hat{a} = \pi(a).
$$


\noindent Let
$$
k_{0}=\min \left \{k: k\hat{a}(x,t) = g\circ h_{t}^{U}(x)-g(x) \right \},
$$

\noindent for $k \in \mathbb{Z}^{+}$ and $g: M \to S^{1}$.

%
From \cite{ansai} we have the following cases.

%
If $k_{0}=\infty$ then $\{ \phi_{t}^{W} \}$ has purely absolutely continuous spectrum on $L^{2}_{0}(\hat{M})$; this is the case when $\{ \phi_{t}^{H} \}$ is ergodic, and thus,
$$
\overline{Ran(H)}=L^{2}_{0}(\hat{M}).
$$

%
When, $k_{0}<\infty$, we have a nontrivial pure point component of the spectrum. To show this, we consider the subspaces given by
$$
E_{nk_{0}}=\left \{f(x)e^{ink_{0}\theta} \right \} \subset L^{2}(\hat{M})
$$
\noindent for $f \in L^{2}(M)$ and $n \in \mathbb{Z}$.

%
The operator
$$
H_{t}:f(x) \to f \circ h_{t}^{U}(x)\>e^{ink_{0}\hat{a}(x,t)}
$$
\noindent is unitarily equivalent to the restriction of the unitary group $\{\phi_{t}^{H}\}$ to $E_{nk_{0}}$.

%
Let
$$
S_{t}:f(x) \to f \circ h_{t}^{U}(x)
$$

\noindent and
$$
V_{t}:f(x) \to f(x)\>e^{-ing(x)}.
$$

%
We compute $H_{t} \circ V_{t}$:
\[
\begin{split}
\left (H_{t} \circ V_{t} \right )(f)(x)
= & f \circ h_{t}^{U}(x)\>e^{-ing \circ h_{t}^{U}(x) }\>e^{ink_{0}\hat{a}(x,t)}\\
= & f \circ h_{t}^{U}(x)\>e^{-ing \circ h_{t}^{U}(x)}\>e^{ing \circ h_{t}^{U}(x)-ing(x)}\\
= & f \circ h_{t}^{U}(x)\>e^{-ing(x)} \\
= & \left (V_{t} \circ S_{t} \right )(f)(x).
\end{split}
\]
\noindent Hence, on $E_{nk_{0}}$, $H_{t}$ is unitarily equivalent to $S_{t}$. Since $e^{ink_{0}\theta}$ is an invariant function for $S_{t}$, $H_{t}$ has an eigenfunction in $E_{nk_{0}}$ for every $n$. Thus, the spectrum on $E_{nk_{0}}$ has  an absolutely continuous component as well as an infinite dimensional pure point component. This leads to the following Corollary.

\begin{corollary} \label{moretwisted}
The spectrum of $\{ \phi_{t}^{W} \}$ on $C^\perp$, the orthocomplement of the constants, has a pure point component (possibly trivial) and an absolutely continuous component, but no singularly continuous component.
\end{corollary}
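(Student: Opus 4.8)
The plan is to split $C^\perp=L^2_0(\hat M)$ into an $\{e^{tW}\}$-invariant orthogonal sum on which the absolutely continuous and the pure point parts sit separately, apply Theorem \ref{twisted} to one summand, and exhibit explicit eigenfunctions on the other. First I would record that $H=W-\widehat{d/d\theta}$ commutes with $W$: since $\alpha=\alpha(x)$ does not depend on $\theta$ we have $[W,\widehat{d/d\theta}]=0$, hence $[H,W]=0$ and also $[H,\widehat{d/d\theta}]=0$, so $e^{tW}=e^{tH}e^{t\widehat{d/d\theta}}$ (equivalently, the flow $\{\phi_t^H\}$ and the fiber rotations commute). Consequently $\{e^{tW}\}$ preserves $\overline{Ran(H)}$ and, $H$ being skew-adjoint, its orthogonal complement $\ker H$; writing $\ker H=\mathbb C\oplus\mathcal K$ with $\mathcal K=\ker H\cap L^2_0(\hat M)$, we obtain the reducing decomposition $C^\perp=\overline{Ran(H)}\oplus\mathcal K$.

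On $\overline{Ran(H)}$, Theorem \ref{twisted} gives that every spectral measure $\mu_f$ is absolutely continuous, so the restriction of $\{e^{tW}\}$ to $\overline{Ran(H)}$ is purely absolutely continuous (and it is nontrivial, as it already contains the $L^2_0$ part of the zeroth Fourier mode, on which $W$ restricts to the horocycle flow). It remains to analyze $\mathcal K$. Here I would invoke the cocycle classification of \cite{ansai} recalled above: if $k_0=\infty$ the flow $\{\phi_t^H\}$ is ergodic, so $\ker H=\mathbb C$, $\mathcal K=\{0\}$, and the spectrum on $C^\perp$ is purely absolutely continuous (the pure point component is trivial). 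If $k_0<\infty$, then by \cite{ansai} together with the computation preceding the Corollary, $\mathcal K$ is the closed span of the functions $F_n(x,\theta)=e^{-ing(x)}e^{ink_0\theta}$, $n\in\mathbb Z\setminus\{0\}$, which are exactly the $\{\phi_t^H\}$-invariant functions in $L^2_0(\hat M)$. Since $F_n$ lies in the Fourier mode $\{f(x)e^{ink_0\theta}\}$, on which $\widehat{d/d\theta}$ acts as multiplication by $ink_0$, and $e^{tH}F_n=F_n$, we get $e^{tW}F_n=e^{tH}e^{t\widehat{d/d\theta}}F_n=e^{ink_0t}F_n$; thus each $F_n$ is an eigenfunction of $\{\phi_t^W\}$ with eigenvalue $nk_0$, and $\{e^{tW}\}|_{\mathcal K}$ has pure point spectrum with eigenvalues $\{nk_0:n\in\mathbb Z\setminus\{0\}\}$.

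Putting the two summands together, $C^\perp$ is the orthogonal sum of a purely absolutely continuous subspace and a subspace with pure point spectrum, so the spectrum of $\{\phi_t^W\}$ on $C^\perp$ consists of a nontrivial absolutely continuous component together with a pure point component that is trivial precisely when $k_0=\infty$, and has no singularly continuous part — which is the assertion. The only input beyond elementary bookkeeping is the identification of $\ker H$ through the cohomological invariant $k_0$; this is precisely the skew-product cocycle classification borrowed from \cite{ansai}, and everything else follows from the commutation $[H,W]=0$ and Theorem \ref{twisted}.
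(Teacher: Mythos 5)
Your proof is correct and follows essentially the same line as the paper: you use the cocycle invariant $k_0$ together with Theorem \ref{twisted} to split $C^\perp$ into an absolutely continuous part supported on $\overline{Ran(H)}$ and a pure point part spanned by the explicit $\{\phi_t^H\}$-invariant functions $e^{-ing(x)}e^{ink_0\theta}$ when $k_0<\infty$ (and trivial when $k_0=\infty$). The explicit reducing decomposition $C^\perp=\overline{Ran(H)}\oplus\mathcal{K}$ and the computation $e^{tW}F_n=e^{ink_0t}F_n$ spell out what the paper leaves somewhat implicit, but the underlying ideas and the invocation of the Anzai cohomology classification are the same.
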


\section{Applications to maps}

\noindent The author in \cite{tiedra} uses the Mourre Estimate \cite{amrein}, \cite{mourre} to prove the following spectral results. Here we rederive similar results by showing that the conditions of Theorem \ref{main} are satisfied; our results are slightly weaker as we prove results for functions of class $C^{2}$ while the author in \cite{tiedra} considers functions of class $C^{1}$ with an added Dini Condition. We use the notation and description from \cite{tiedra}.

\subsection{Skew products over translations}

%
Let $X$ be a compact metric abelian Lie group with normalized Haar measure $\mu$. Let $\{F_{t}\}$ be a uniquely ergodic \cite{fursten} translation flow (we assume that $F_{1}$ is ergodic), \begin{center} $F_{t} = y_{t}x$ with vector field $Y$. \end{center} The associated operators $\{V_{t}\}$ are given by \begin{center} $V_{t}\psi=\psi \circ F_{t}$ with generator $P=-i\mathscr{L}_{Y}$. \end{center} Let $G$ be a compact metric abelian group. Let $\phi:$ $X \to G$ such that $\phi$ can be written as $\phi=\xi \eta$ where $\xi$ is a group homomorphism and $\eta$ satisfies
$$
\sup_{t > 0} \left \Vert \frac{\mathscr{L}_{Y}(\chi \circ \eta) \circ F_{t} - \mathscr{L}_{Y}(\chi \circ \eta)}{t}) \right \Vert_{\infty} < \infty
$$
\noindent and
$$
\chi \circ \eta = e^{i\tilde{\eta}_{\chi}}
$$
\noindent for $\tilde{\eta}_{\chi} \in Dom(P)$ a real-valued function determined by $\chi$ and $\eta$.

%
%
The skew product, $T:X \times G \to X \times G$, is defined by
$$
T(x,z)=(y_{1}x,\phi(x) z)
$$
\noindent with corresponding unitary operator \begin{center} $W \Psi = \Psi \circ T$. \end{center} Let $\hat{G}$ be the character group of $G$. The decomposition $L^{2}(X \times G)=\bigoplus_{\chi \in \hat{G}}L_{\chi}$, $L_{\chi} = \{ \varphi \otimes \chi : \varphi \in L^{2}(X)\}$ and the restriction of $W$ to the subspaces $L_{\chi}$ allow us to study the spectrum of convenient, unitarily equivalent operators to $W|_{L_{\chi}}$, namely,
\[
U_{\chi} \psi = (\chi \circ \psi) V_{1} \psi
\]

\noindent for $\chi \circ \xi \not\equiv 1$. (Here $U_{\chi}$ takes the place of $e^{U}$ as given in the conditions.)

%
We will choose to take the commutator with $P$; it follows from \cite{amreinnelson} that $P$ is essentially self-adjoint  on $D = C^{\infty}(X)$.
\[
\begin{split}
\left [P, U_{\chi}\right ]= &\left [P,(\chi \circ \phi)V_{1}\right ]\\
= & \left [P, (\chi \circ \phi)I \right ]V_{1}\\
= & -i\mathscr{L}_{Y}\left (\chi \circ \phi \right )V_{1}\\
= & -i\left [\mathscr{L}_{Y}(\chi \circ \xi)(\chi \circ \eta) + (\chi \circ \xi)\mathscr{L}_{Y}(\chi \circ \eta) \right ]V_{1}\\
= & -i\left [\xi_{0} + \frac{\mathscr{L}_{Y}(\chi \circ \eta)}{(\chi \circ \eta)} \right ](\chi \circ \phi)V_{1}\\
= & -i\left [\xi_{0} + \frac{\mathscr{L}_{Y}(\chi \circ \eta)}{(\chi \circ \eta)} \right ]U_{\chi}
\end{split}
\]
\noindent where $\xi_{0}=\frac{d}{dt}(\chi \circ \xi)(y_{t})|_{t=0} \in i\mathbb{R}\setminus\{0\}$.

%
%
%
%
So,
$$
[P, U_{\chi}] =\left (- i\xi_{0}-\frac{i\mathscr{L}_{Y}(\chi \circ \eta)}{(\chi \circ \eta)} \right )U_{\chi} = GU_{\chi}.
$$
\noindent Thus,
\[
U_{\chi}^{-n}[P, U_{\chi}^{n}] = \sum_{k=1}^{n}U_{\chi}^{-k}GU_{\chi}^{k}
\]
\[
=\boxed{\sum_{k=1}^{n}\left (- i\xi_{0}-\frac{i\mathscr{L}_{Y}(\chi \circ \eta)}{(\chi \circ \eta)}\right )\circ F_{-k}=\sum_{k=1}^{n}G\circ F_{-k}= H(n).}
\]
\noindent Note that
\[
\begin{split}
\frac{-i\mathscr{L}_{Y}(\chi \circ \eta)}{(\chi \circ \eta)} = & \frac{-i\mathscr{L}_{Y}(e^{i\tilde{\eta}_{\chi}})}{e^{i\tilde{\eta}_{\chi}}}\\
= & \frac{-ie^{i\tilde{\eta}_{\chi}} \cdot \mathscr{L}_{Y}(\tilde{\eta}_{\chi})}{e^{i\tilde{\eta}_{\chi}}}\\
= & -i\mathscr{L}_{Y}(\tilde{\eta}_{\chi})\\
= & \> P\tilde{\eta}_{\chi}.
\end{split}
\]

%
%
From unique ergodicity we get the following convergence
\[
\lim_{n \to \infty} \frac{H(n)}{n}u =\left (-i\xi_{0} + \int_{X} P \tilde{\eta}_{\chi} \> d\mu \right )u =\boxed{-i\xi_{0}u=Hu}
\]
\noindent uniformly in $n$ for $u \in L_{\chi}$.

%
Since
\[
\left \Vert \; \langle U_{\chi}^{n}f,f \rangle  _{L_{\chi}}\; \right \Vert_{\ell^{2}(\mathbb{Z})} \; =\; \left \Vert  \frac{1}{\sigma}\int_{0}^{\sigma} \langle e^{sP}U_{\chi}^{n}f, e^{sP}f \rangle_{L_{\chi}} ds\> \right \Vert_{\ell^{2}(\mathbb{Z})},
\]
\noindent the preliminary assumptions for Theorem \ref{main} are satisfied for $B_{1}=B_{2}=I$. Now we proceed by showing that the conditions of Theorem \ref{main} hold.


\noindent  $(i)$ It is unnecessary to consider coboundaries since both $H=-i\xi_{0}I$ and $H^{-1}=\frac{i}{\xi_{0}}I$ are constants. Instead we take any $f \in L_{\chi}$.
\[
\begin{split}
\frac{H(n)}{n}H^{-1}f = & \left (\frac{1}{n}\sum_{k=1}^{n}\left (- i\xi_{0}-\frac{i\mathscr{L}_{Y}(\chi \circ \eta)}{(\chi \circ \eta)} \right )\circ F_{-k} \right ) \cdot \frac{i}{\xi_{0}}f\\
= & f + \left (\frac{1}{n}\sum_{k=1}^{n}  P\tilde{\eta}_{\chi} \circ F_{-k} \right ) \cdot \frac{i}{\xi_{0}}f .
\end{split}
\]
\noindent So,
$$
\left \Vert \frac{H(n)}{n}H^{-1}f \right \Vert_{L_{\chi}} \leq \left (1 + \frac{\Vert P\tilde{\eta}_{\chi} \Vert_{L_{\chi}}}{|\xi_{0}|} \right ) \cdot \Vert f \Vert_{L_{\chi}}.
$$
\noindent Since $\tilde{\eta}_{\chi} \in Dom(P)$,
$$
\Vert P\tilde{\eta}_{\chi} \Vert_{L_{\chi}} \leq C_{1}.
$$

%
%
 Thus, $\frac{H(n)}{n}H^{-1}$ is a bounded operator with uniformly bounded norm in $n$,
$$
\left \Vert \frac{H(n)H^{-1}}{n} \right \Vert_{op} \leq 1 + \frac{C_{1}}{|\xi_{0}|}.
$$

\noindent Also,
\[
\begin{split}
\left \Vert \left (I - \frac{H(n)}{n}H^{-1} \right ) f \right \Vert_{L_{\chi}}= &
\left \Vert \left (1- \left (1+ \left (\frac{1}{n}\sum_{k=1}^{n}  P\tilde{\eta}_{\chi} \circ F_{-k}\right ) \cdot \frac{i}{\xi_{0}}\right )\right )f \right  \Vert_{L_{\chi}}\\
= &\left \Vert \left (\frac{1}{n}\sum_{k=1}^{n}  P\tilde{\eta}_{\chi} \circ F_{-k} \cdot \frac{i}{\xi_{0}} \right )f \right \Vert_{L_{\chi}}\\
\leq & \left \Vert \left (\frac{1}{n}\sum_{k=1}^{n}  P\tilde{\eta}_{\chi} \circ F_{-k} \right ) \cdot \frac{i}{\xi_{0}} \right \Vert_{\infty} \cdot \Vert f \Vert_{L_{\chi}}.
\end{split}
\]
\noindent As a result of unique ergodicity, the following converges uniformly,
$$
\lim_{n \to \infty} \left (\frac{1}{n}\sum_{k=1}^{n}  P\tilde{\eta}_{\chi} \circ F_{-k} \right ) \cdot \frac{i}{\xi_{0}}= \frac{i}{\xi_{0}}\int_{X} P \tilde{\eta}_{\chi} \> d\mu=0,
$$
\noindent and thus,
\[
\begin{split}
\limsup_{n \to \infty} \left \Vert I - \frac{H(n)}{n}H^{-1}  \right \Vert_{op}\leq & \limsup_{n \to \infty} \left \Vert \left (\frac{1}{n}\sum_{k=1}^{n}  P\tilde{\eta}_{\chi} \circ F_{-k} \right ) \cdot \frac{i}{\xi_{0}} \right \Vert_{\infty}\\
= & \left \Vert \frac{i}{\xi_{0}} \int_{X} P \tilde{\eta}_{\chi} \> d\mu \right \Vert_{\infty}\\
= & \> 0.
\end{split}
\]

\noindent Hence,
$$
\limsup_{n \to \infty} \left \Vert I + \frac{H(n)}{n}H^{-1}  \right \Vert_{op}<1.
$$


\noindent$(ii)$
\[
\begin{split}
\left [P,\frac{H(n)}{n}H^{-1}\right ]= & \left [P, I + (\frac{1}{n}\sum_{k=1}^{n}  P\tilde{\eta}_{\chi} \circ F_{-k}) \cdot \frac{i}{\xi_{0}}I \right ]\\
= & \left [P, \left (\frac{1}{n}\sum_{k=1}^{n}  P\tilde{\eta}_{\chi} \circ F_{-k} \right ) \cdot \frac{i}{\xi_{0}}I \right ]\\
= & \left (\frac{1}{n}\sum_{k=1}^{n}  P(P\tilde{\eta}_{\chi}) \circ F_{-k} \right ) \cdot \frac{i}{\xi_{0}}I.
\end{split}
\]
\noindent Since $\sup_{t > 0} \left \Vert \frac{\mathscr{L}_{Y}(\chi \circ \eta) \circ F_{t} - \mathscr{L}_{Y}(\chi \circ \eta)}{t}) \right \Vert_{\infty} < \infty$, $P(P\tilde{\eta}_{\chi})$ is bounded in $L_{\chi}$ and
$$
\left \Vert \left [P,\frac{H(n)}{n}H^{-1} \right ] f \right \Vert_{L_{\chi}} \leq \frac{\Vert P(P\tilde{\eta}_{\chi}) \Vert_{L_{\chi}}}{|\xi_{0}|} \cdot \Vert f \Vert_{L_{\chi}} \leq \frac{C_{2}}{|\xi_{0}|} \Vert f \Vert_{L_{\chi}}.
$$
\noindent Thus, $\left [P,\frac{H(n)}{n}H^{-1} \right ]$ extends to a bounded operator on $L_{\chi}$ with uniformly bounded norm in $n$,
$$
\left \Vert \left [P,\frac{H(n)}{n}H^{-1} \right ] \right \Vert_{op} \leq \frac{C_{2}}{|\xi_{0}|}.
$$

\noindent $(iii)$ Since the operator $H$ is just multiplication by the constant $-i\xi_{0}$,

\begin{center} $\left [H(n),H\right ]H^{-1}=0$.

\end{center}

\noindent Thus, condition $(iii)$ is immediately satisfied.

%
Since conditions $(i)$, $(ii)$, and $(iii)$ of Theorem \ref{main} are satisfied on each $L_{\chi}$, we have shown that the operator $U_{\chi}$ has purely absolutely continuous spectrum on $L_{\chi}$. Thus, $W$ has purely absolutely continuous spectrum when restricted to the subspace $\bigoplus_{\chi \in \hat{G},\chi \circ \xi \not\equiv 1 }L_{\chi}$.

%
In addition, from the purity law in \cite{helson} extended to translations, the maximal spectral type is either purely Lebesgue, purely  singularly continuous, or purely discrete with respect to $\mu$ (the Haar measure). Since we know that the spectrum is absolutely continuous from above, we have proved the following theorem.

\begin{theorem}
 The operator $U_{\chi}$ has Lebesgue spectrum on $L_{\chi}$. Thus, $W$ has countable Lebesgue spectrum when restricted to the subspace $\bigoplus_{\chi \in \hat{G},\chi \circ \xi \not\equiv 1 }L_{\chi}$.
\end{theorem}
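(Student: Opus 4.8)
The plan is to deduce the statement by feeding the absolute-continuity conclusion of Theorem \ref{main} into the purity law. Recall that in the discussion preceding the theorem we verified conditions $(i)$, $(ii)$, $(iii)$ of Theorem \ref{main} for the operator $U_{\chi}$ acting on $L_{\chi}$ (for every character $\chi$ with $\chi \circ \xi \not\equiv 1$), with the preliminary hypotheses holding for $B_{1}=B_{2}=I$ and with $H=-i\xi_{0}I$, $H(n)=\sum_{k=1}^{n}G\circ F_{-k}$. Hence, by the discrete version of Theorem \ref{main}, every spectral measure $\mu_{f}$, $f\in L_{\chi}$, satisfies $\widehat{\mu_{f}}\in\ell^{2}(\mathbb{Z})$, so $U_{\chi}$ has purely absolutely continuous spectrum on $L_{\chi}$; in particular the maximal spectral type of $U_{\chi}\big|_{L_{\chi}}$ is an absolutely continuous measure on the circle.

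Next I would invoke the purity law of \cite{helson}, extended to translations, which asserts that the maximal spectral type of $U_{\chi}$ on $L_{\chi}$ is pure: purely Lebesgue, purely singularly continuous, or purely discrete with respect to Haar measure. An absolutely continuous maximal spectral type is neither singularly continuous nor discrete, so the only surviving alternative is that it is equivalent to Lebesgue measure. Since the entire spectrum is absorbed into the maximal spectral type, and that type is now Lebesgue, $U_{\chi}$ has Lebesgue spectrum on $L_{\chi}$, which is the first assertion.

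Finally I would assemble the pieces over the character group. Because $G$ is a compact metrizable abelian group, $\hat{G}$ is countable, so $\{\chi\in\hat{G}:\chi\circ\xi\not\equiv 1\}$ is a countable index set, and by construction $W\big|_{L_{\chi}}$ is unitarily equivalent to $U_{\chi}$. Thus $W$ restricted to $\bigoplus_{\chi\circ\xi\not\equiv 1}L_{\chi}$ is a countable orthogonal sum of operators each having Lebesgue spectrum on an infinite-dimensional subspace; using separability of the ambient Hilbert space, the maximal spectral type of this restriction is Lebesgue with uniform multiplicity $\aleph_{0}$, i.e. countable Lebesgue spectrum. The argument at this stage is essentially bookkeeping: the only points that deserve attention are checking that the purity law genuinely upgrades ``purely absolutely continuous'' to ``Lebesgue'' (ruling out absolutely continuous types not equivalent to Lebesgue measure) and that the multiplicity obtained on the direct sum is exactly countably infinite rather than finite, which holds because the sum runs over infinitely many characters $\chi$, each contributing at least one copy of Lebesgue spectrum.
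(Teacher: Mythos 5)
Your proposal follows the paper's argument exactly: Theorem \ref{main} (already verified on each $L_{\chi}$) gives purely absolutely continuous spectrum, and Helson's purity law then forces the maximal spectral type to be Lebesgue rather than singular continuous or discrete. The paper itself asserts the ``countable'' multiplicity without detailed justification, so your additional remarks on the countability of $\hat{G}$ and the multiplicity count go slightly beyond the text but do not change the route.
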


Similar results with less restrictive assumptions on $\eta$ have been derived in \cite{iwank} and \cite{tiedra}.

\subsection{Furstenberg transformations}

%
Let $\mu_{n}$ be the normalized Haar measure on $\mathbb{T}^{n} \simeq \mathbb{R}^{n}/\mathbb{Z}^{n}$  and $\mathscr{H}_{n}=L^{2}(\mathbb{T}^{n}, \mu_{n})$. Let $T_{d}:\mathbb{T}^{d} \to \mathbb{T}^{d}$, $d \geq 2$, be the uniquely ergodic  map \cite{fursten}
\[
T_{d}(x_{1}, x_{2}, ..., x_{d})
\]
\[
=(x_{1}+y, x_{2}+b_{2,1}x_{1}+h_{1}(x_{1}), ..., x_{d}+b_{d,1}x_{1}+ \cdot \cdot \cdot + b_{d,d-1}x_{d-1}+h_{d-1}(x_{1},x_{2},...,x_{d-1}))
\]
\[
\mod  \mathbb{Z}^{d}
\]
\noindent for $y \in \mathbb{R}\setminus \mathbb{Q}$, $b_{j,k} \in \mathbb{Z}$, $b_{l,l-1} \neq  0$, and $ l \in \{2,...,d\}$. (For $n=2$, we get the skew product in 4.1). Let each $h_{j}:\mathbb{T}^{j} \to \mathbb{R}$ satisfy a uniform Lipschitz condition in $x_{j}$ and be in $C^{2}(\mathbb{T}^{j})$. What follows is very similar to the case of the skew products over translations. We begin by considering the operator
\[
W_{d}: \mathscr{H}_{d} \to \mathscr{H}_{d}.
\]
\noindent The space $\mathscr{H}_{d}$ can be decomposed into
\begin{center} $\mathscr{H}_{d} = \mathscr{H}_{1} \bigoplus _{j \in \{2,...d \},\> k \in \mathbb{Z} \setminus \{0 \}} \mathscr{H}_{j,k}$ \end{center}
for $\mathscr{H}_{j,k}=\overline{Span\left \{\eta \bigotimes \chi_{k}|\eta \in \mathscr{H}_{j-1}\right \}}$ and $\chi_{k}(x_{j}) = e^{2 \pi i k x_{j}} \in \hat{\mathbb{T}}$.

The restriction of $W_{d}$, $W_{d}|_{\mathscr{H}_{j,k}}$ is unitarily equivalent to the operator
\[
U_{j,k} \eta = e^{2\pi i k \phi_{j}}W_{j-1}\eta
\]
\noindent for
\[
\eta \in \mathscr{H}_{j-1}
\]
\noindent and
\[
\phi_{j}(x_{1},x_{2},...,x_{j-1})=b_{j,1}x_{1}+ \cdot \cdot \cdot + b_{j,j-1}x_{j-1} + h_{j-1}(x_{1},x_{2},...,x_{j-1}).
\]
\noindent We will choose to take the commutator with $P_{j-1} = -i \partial_{j-1}$, the  essentially self-adjoint  \cite{amreinnelson} generator of the translation group $\{V_{t,j-1}\}_{t  \in \mathbb{R}}$ in $\mathscr{H}_{j-1}$. The following formulas hold on $D = C^{\infty}(\mathbb{T}^{j-1})$.
\[
\begin{split}
\left [P_{j-1},U_{j,k} \right ]= &\left [P_{j-1}, e^{2 \pi i k \phi_{j}}I \right ]W_{j-1}\\
= & -i\partial_{j-1}(e^{2 \pi i k \phi_{j}}) W_{j-1}\\
= & (2 \pi k b_{j,j-1} + 2 \pi k \partial_{j-1} h_{j-1})e^{2 \pi i k \phi_{j}}W_{j-1}.
\end{split}
\]
\noindent So,
\[
\left [P_{j-1}, U_{j,k}\right ] =(2 \pi kb_{j,j-1}+ 2 \pi k \partial_{j-1}h_{j-1})U_{j,k}= G U_{j,k}.
\]
\noindent
Thus,
\[ U_{j,k}^{-n}\left [P_{j,k}, U_{j,k}^{n} \right ] = \boxed{\sum^{n}_{l=1} U^{-l}_{j,k}GU^{l}_{j,k} =\sum^{n}_{l=1} G \circ T_{j-1}^{-l}=H(n).}
\]

\noindent From unique ergodicity we get the following convergence
\[
\lim_{n \to \infty} \frac{H(n)}{n}u =2 \pi k b_{j,j-1} + 2\pi k \int_{\mathbb{T}^{j-1}} \partial_{j-1}h_{j-1} \> d\mu =\boxed{2 \pi k b_{j,j-1}u=Hu}
\]

\noindent uniformly in $n$ for $u \in D=\mathscr{H}_{j-1}$.

%
Since
\[
\left \Vert \; \langle U_{j,k}^{n}f,f \rangle _{\mathscr{H}_{j-1}}\; \right \Vert_{\ell^{2}(\mathbb{Z})} \; = \; \left \Vert  \frac{1}{\sigma}\int_{0}^{\sigma} \langle e^{sP_{j-1}}U_{j,k}^{n}f, e^{sP_{j-1}}f \rangle_{\mathscr{H}_{j-1}} ds\> \right \Vert_{\ell^{2}(\mathbb{Z})},
\]

\noindent the preliminary assumptions for Theorem \ref{main} are satisfied for $B_{1}=B_{2}=I$.  Now we proceed by showing that the conditions of Theorem \ref{main} hold.


\noindent $(i)$ It is unnecessary to consider coboundaries since both $H=2 \pi k b_{j,j-1}I$ and $H^{-1}=\frac{1}{2 \pi k b_{j,j-1}}I$ are constants. Instead we take any $f \in \mathscr{H}_{j-1}$.
\[
\begin{split}
\frac{H(n)}{n}H^{-1}f = &
\left (\frac{1}{n}\left (\sum^{n}_{l=1} \left (2 \pi kb_{j,j-1}+ 2 \pi k \partial_{j-1}h_{j-1} \right ) \circ T_{j-1}^{-l} \right ) \right) \cdot \frac{1}{2 \pi kb_{j,j-1}}f\\
= & f + \left (\frac{1}{n}\sum^{n}_{l=1} \left (2 \pi k \partial_{j-1}h_{j-1} \right ) \circ T_{j-1}^{-l} \right )\frac{1}{2 \pi kb_{j,j-1}}f.
\end{split}
\]
\noindent Hence,
\[
\left \Vert \frac{H(n)}{n}H^{-1}f \right \Vert_{\mathscr{H}_{j-1}} \leq \left (1+\frac{\Vert 2 \pi k \partial_{j-1}h_{j-1} \Vert_{\mathscr{H}_{j-1}}}{|2 \pi kb_{j,j-1}|} \right ) \Vert f \Vert_{\mathscr{H}_{j-1}}.
\]
\noindent Since $h_{j-1}$ satisfies a uniform Lipschitz condition in $x_{j-1}$,
\[
\left \Vert \partial_{j-1}h_{j-1} \right \Vert_{\mathscr{H}_{j-1}}
\leq C_{1}.
\]
\noindent So  $\frac{H(n)}{n}H^{-1}$ extends to a bounded operator on $\mathscr{H}_{j,k}$ with uniformly bounded norm in $n$,
\[
\left \Vert \frac{H(n)}{n}H^{-1} \right  \Vert_{op} \leq 1+\frac{| 2 \pi k| \left \Vert \partial_{j-1}h_{j-1} \right \Vert_{\mathscr{H}_{j-1}}}{|2 \pi kb_{j,j-1}|} \leq \frac{C_{1}}{|b_{j,j-1}|}.
\]
\noindent Also,
\[
\begin{split}
& \left \Vert \left (I - \frac{H(n)}{n}H^{-1} \right ) f \right \Vert_{\mathscr{H}_{j-1}}\\
= & \left \Vert \left (1- \left (1+\left (\frac{1}{n}\sum^{n}_{l=1} (2 \pi k \partial_{j-1}h_{j-1}\right ) \circ T_{j-1}^{-l}\right )\frac{1}{2 \pi kb_{j,j-1}}\right )f \right \Vert_{\mathscr{H}_{j-1}}\\
= & \left \Vert \left (\frac{1}{n}\sum^{n}_{l=1} \left (2 \pi k \partial_{j-1}h_{j-1} \right ) \circ T_{j-1}^{-l}\right )\frac{1}{2 \pi kb_{j,j-1}}f \right \Vert_{\mathscr{H}_{j-1}}\\
\leq & \left \Vert \left (\frac{1}{n}\sum^{n}_{l=1} \left (2 \pi k \partial_{j-1}h_{j-1} \right ) \circ T_{j-1}^{-l} \right )\frac{1}{2 \pi kb_{j,j-1}} \right \Vert_{\infty} \cdot \Vert f \Vert_{\mathscr{H}_{j-1}}.
\end{split}
\]
\noindent As a result of unique ergodicity, the following converges uniformly,
\[
\lim_{n \to \infty} \left (\frac{1}{n}\sum^{n}_{l=1} (2 \pi k \partial_{j-1}h_{j-1}) \circ T_{j-1}^{-l} \right )\frac{1}{2 \pi kb_{j,j-1}} =\frac{1}{b_{j,j-1}} \int_{\mathbb{T}^{j-1}} \partial_{j-1}h_{j-1} \> d\mu=0,
\]
 \noindent and thus,
\[
\begin{split}
\limsup_{n \to \infty}\left  \Vert I - \frac{H(n)}{n}H^{-1}  \right \Vert_{op}\leq & \limsup_{n \to \infty} \left \Vert \left (\frac{1}{n}\sum^{n}_{l=1} (2 \pi k \partial_{j-1}h_{j-1}) \circ T_{j-1}^{-l} \right )\frac{1}{2 \pi kb_{j,j-1}} \right \Vert_{\infty}\\
= &\left \Vert\frac{1}{b_{j,j-1}} \int_{\mathbb{T}^{j-1}} \partial_{j-1}h_{j-1} \> d\mu \> \right \Vert_{\infty}=0
\end{split}
\]
\noindent Hence,
$$
\limsup_{n \to \infty} \left \Vert I + \frac{H(n)}{n}H^{-1}  \right \Vert_{op}<1.
$$
\noindent $(ii)$
\[
\begin{split}
\left [P_{j-1},\frac{H(n)}{n}H^{-1} \right ]= & \left [P_{j-1}, I + \left (\frac{1}{n}\sum_{k=1}^{n}  (2 \pi k \partial_{j-1}h_{j-1}) \circ T_{j-1}^{-l} \right ) \cdot \frac{1}{2 \pi kb_{j,j-1}}I \right ]\\
= & \left [P_{j-1}, \left (\frac{1}{n}\sum_{k=1}^{n}  (2 \pi k \partial_{j-1}h_{j-1}) \circ T_{j-1}^{-l} \right ) \cdot \frac{1}{2 \pi kb_{j,j-1}}I \right ]\\
= & \left (\frac{1}{n}\sum_{k=1}^{n}  (2 \pi k \partial_{j-1}(\partial_{j-1}h_{j-1})) \circ T_{j-1}^{-l} \right ) \cdot \frac{1}{2 \pi kb_{j,j-1}}I.
\end{split}
\]
\noindent Since $h_{j-1} \in C^{2}(\mathbb{T}^{j-1})$, $\partial_{j-1}(\partial_{j-1} h_{j-1})$ is bounded in $\mathscr{H}_{j-1}$,
\[
\begin{split}
\Vert [P,\frac{H(n)}{n}H^{-1}] f \Vert_{\mathscr{H}_{j-1}}
\leq & \frac{|2\pi k|\Vert \partial_{j-1}(\partial_{j-1} h_{j-1})\Vert_{\mathscr{H}_{j-1}}}{|2 \pi kb_{j,j-1}|} \cdot \Vert f \Vert_{\mathscr{H}_{j-1}}\\
\leq &  \frac{C_{2}}{|b_{j,j-1}|} \Vert f \Vert_{\mathscr{H}_{j-1}}.
\end{split}
\]
\noindent Thus, $[P,\frac{H(n)}{n}H^{-1}]$ extends to a bounded operator on $\mathscr{H}_{j-1}$ with uniformly bounded norm in $n$,
\[
\left \Vert [P,\frac{H(n)}{n}H^{-1}] \right \Vert_{op} \leq \frac{C_{2}}{|b_{j,j-1}|}.
\]


\noindent $(iii)$ Since the operator $H$ is just multiplication by $2 \pi k b_{j,j-1}$,
%
\[
\left [H(n),H \right ]H^{-1}=0.
\]

 Thus, condition $(iii)$ is immediately satisfied.

%
%
 Since conditions $(i)$, $(ii)$, and $(iii)$ of Theorem \ref{main} are satisfied, the operator $U_{j,k}$ has purely absolutely continuous spectrum on each $\mathscr{H}_{j,k}$. Thus, $W_{d}$ has purely absolutely continuous spectrum on the orthocomplement of  $\mathscr{H}_{1}$. Applying the the purity law in \cite{helson}, we derive, with slightly stronger regularity assumptions, a similar result to the ones found in \cite{iwank} and  \cite{tiedra}.

\begin{theorem}
$W_{d}$ has countable Lebesgue spectrum on the orthocomplement of  $\mathscr{H}_{1}$.
\end{theorem}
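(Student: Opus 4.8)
The plan is to combine the purely absolutely continuous spectrum of each $U_{j,k}$ already established above with a fiberwise purity law and a multiplicity count over the countable family $\{\mathscr{H}_{j,k}\}$. In outline: (1) recall that each $U_{j,k}$ has purely a.c.\ spectrum; (2) upgrade "absolutely continuous" to "Lebesgue" on each $\mathscr{H}_{j,k}$ via the purity law; (3) sum over the countably many pairs $(j,k)$ to get uniform multiplicity $\aleph_0$.

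First I would record what has been proved: verifying conditions $(i)$, $(ii)$, $(iii)$ of Theorem \ref{main} shows that each $U_{j,k}$ — unitarily equivalent to the restriction $W_d|_{\mathscr{H}_{j,k}}$ — has purely absolutely continuous spectrum on $\mathscr{H}_{j,k}$, so $W_d$ has purely absolutely continuous spectrum on $\bigoplus_{j\in\{2,\dots,d\},\ k\in\mathbb{Z}\setminus\{0\}}\mathscr{H}_{j,k}$, which is exactly the orthocomplement of $\mathscr{H}_1$ in $\mathscr{H}_d$.

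Next I would upgrade "absolutely continuous" to "Lebesgue" fiber by fiber. Since $T_d$ is uniquely ergodic and each $T_{j-1}$ is a factor of it (via the projection onto the first $j-1$ coordinates), every $T_{j-1}$ is itself uniquely ergodic, hence ergodic; and $\mathscr{H}_{j,k}$ is precisely the $\chi_k$-isotypic component of the compact abelian ($\mathbb{T}$-)extension of $(\mathbb{T}^{j-1},T_{j-1})$ determined by the cocycle $\phi_j$, on which $W_d$ acts as the weighted composition operator $U_{j,k}$. The purity law of \cite{helson}, applied level by level in the Furstenberg tower exactly as it was applied in the skew-products-over-translations case treated above, then forces the maximal spectral type of $U_{j,k}$ on $\mathscr{H}_{j,k}$ to be pure — purely Lebesgue, purely singularly continuous, or purely discrete with respect to $\mu_{j-1}$. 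Being absolutely continuous and non-trivial, it is purely Lebesgue; thus each $U_{j,k}$ has Lebesgue spectrum on $\mathscr{H}_{j,k}$.

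Finally I would assemble the multiplicity. The index set $\{(j,k): 2\le j\le d,\ k\in\mathbb{Z}\setminus\{0\}\}$ is countably infinite, and $W_d$ restricted to the orthocomplement of $\mathscr{H}_1$ is unitarily equivalent to $\bigoplus_{j,k}U_{j,k}$. For each summand the maximal spectral type is Lebesgue, so at Lebesgue-almost every point of the circle its local multiplicity is at least one; summing over the countably many summands gives multiplicity at least $\aleph_0$ almost everywhere, while separability of $\mathscr{H}_d$ bounds it by $\aleph_0$. Hence $W_d$ has Lebesgue spectrum of uniform multiplicity $\aleph_0$ on the orthocomplement of $\mathscr{H}_1$, i.e.\ countable Lebesgue spectrum, the Furstenberg analogue of the results in \cite{iwank} and \cite{tiedra}. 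The only delicate point — the rest being routine — is the invocation of the purity law in this iterated skew-product setting: one must check that its hypotheses, namely ergodicity of the base at each level and measurability (here, the $C^2$/Lipschitz regularity) of the cocycle $\phi_j$, hold for every $j$, which is immediate from unique ergodicity of $T_d$ and the standing assumptions on the $h_j$.
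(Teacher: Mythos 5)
Your proposal is correct and follows essentially the same route as the paper: conditions $(i)$--$(iii)$ of Theorem~\ref{main} give purely absolutely continuous spectrum on each $\mathscr{H}_{j,k}$, the Helson purity law upgrades this to Lebesgue fiber by fiber, and the countably infinite multiplicity comes from summing over the infinite index set $\{(j,k)\}$. You make the multiplicity count explicit (lower bound $\aleph_0$ from the countably many Lebesgue summands, upper bound $\aleph_0$ from separability) where the paper simply cites \cite{iwank} and \cite{tiedra}, but this is an elaboration rather than a different argument.
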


\section*{Acknowledgments}
\noindent The author is extremely grateful to her advisor, Professor Giovanni Forni, for his patience, guidance and support. The author would also like to thank the anonymous referee for various useful comments and suggestions.


Much of this work was completed at the University of Maryland, College Park, and was partially supported by Giovanni Forni's NSF Grant
DMS 1201534.


\end{document}